\theoremstyle{plain}
\newcommand{\D}{\mathbb D}
\newcommand{\R}{\mathbb R}
\newcommand{\C}{\mathbb C}
\newcommand{\bC}{\mathbf C}
\newcommand{\bD}{\mathbf D}
\newcommand{\T}{\partial \D}
\newcommand{\BH}{{\mathcal B}(H) }
\newcommand{\BX}{{\mathcal B}(X) }
\newcommand{\Ker}{\operatorname{Ker}}
\newcommand{\Ran}{\operatorname{Ran}}
\newcommand{\re}{\operatorname{Re}}
\newcommand{\dist}{\operatorname{dist}}
\newcommand{\pocc}{\mathcal{S}}
\newcommand{\ep}{\varepsilon}
\newcommand{\ld}{\lambda}
\newcommand{\sm}{\sigma}
\newcommand\ps[2]{\left\langle #1,#2\right\rangle}
\numberwithin{equation}{section}
\theoremstyle{definition}\newtheorem{num}{}[section]} 
\newcommand{\beq}{\begin{equation}}
\newcommand{\eeq}{\end{equation}}
\newtheorem{theorem}[num]{Theorem}
\newtheorem{lemma}[num]{Lemma}
\newtheorem{proposition}[num]{Proposition}
\theoremstyle{definition}}
\theoremstyle{definition}\newtheorem{example}[num]{Example}}
\theoremstyle{definition}\newtheorem{definition}[num]{Definition}}
\theoremstyle{definition}\newtheorem{remark}[num]{Remark}}
\title[The method of alternating projections]{The rate of convergence in the method of alternating projections}\thanks{The first two named authors have been partially supported by ANR Projet Blanc DYNOP. The third named author was supported by grant No. 201/09/0473 of GA \v CR and IAA100190903 of GA AV. \\{\bf Keywords} : 
the method of alternating projections, Friedrichs angle, speed of convergence, spectral theory, uniformly convex Banach spaces. \\{\bf To appear in} : St. Petersburg Math. J. (translation from Russian of Algebra and Analysis) {\bf 22}(2010), no. 5. Announced in C. R. Math. Acad. Sci. Paris  {\bf 348}(2010), no. 1-2, 53--56.}
\author{Catalin Badea}
\address{Laboratoire Paul Painlev\' e,
Universit\'e Lille 1, CNRS UMR 8524,
F-59655 Villeneuve d'Ascq, France
}
\email{badea@math.univ-lille1.fr}
\author{Sophie Grivaux}
\address{Laboratoire Paul Painlev\' e, CNRS UMR 8524,
Universit\'e Lille 1,
F-59655 Villeneuve d'Ascq, France
}
\email{grivaux@math.univ-lille1.fr}
\author{Vladimir M\"uller}
\address{Institute of Mathematics AV CR, Zitna 25, 115 67 Prague 1, Czech Republic}
\email{muller@math.cas.cz}
\begin{document}

 \begin{abstract}
A generalization of the cosine of the Friedrichs angle between two subspaces to a parameter associated to several closed subspaces of a Hilbert space is given. This parameter is used to analyze the rate of convergence in the von Neumann-Halperin method of cyclic alternating projections. General dichotomy theorems are proved, in the Hilbert or Banach space situation, providing conditions under which the alternative QUC/ASC (quick uniform convergence versus arbitrarily slow convergence) holds. Several meanings for ASC are proposed.
\end{abstract}
\maketitle

\section{Introduction}
Throughout the paper $H$ is a complex Hilbert space. For a closed
linear subspace $S$ of $H$ we denote by $S^{\perp}$ its orthogonal
complement in $H$, and by $P_S$ the orthogonal projection of $H$
onto $S$. In this paper $N$ denotes a fixed positive integer greater or 
equal than $2$.

\subsection{The method of alternating projections} It was proved by J.~von~Neumann \cite[p. 475]{vN} that for two closed subspaces $M_1$ and $M_2$ of $H$, with intersection $M = M_1\cap M_2$, the following convergence result holds:
\beq \label{eq:11}
\lim_{n \to \infty}\|(P_{M_2}P_{M_1})^n(x) -  P_M(x)\| = 0 \quad (x \in H).
\eeq
 Using the notation $T = P_{M_2}P_{M_1}$, von Neumann's result says that the iterates $T^n$ of $T$ are strongly convergent to $T^{\infty} = P_M$. The method of constructing the iterates of $T$ by alternately projecting onto one subspace and then the other is called the \emph{method of alternating projections}. This algorithm, and its variations, occur in several fields, pure or applied. We refer to \cite[Chapter 9]{deutsch:book} as a source for more information.

A generalization of von Neumann's result to $N$ closed subspaces $M_1, \dots , M_N$ with intersection $M = M_1\cap M_2 \cdots \cap M_N$ was proved by Halperin \cite{halperin}: for each $x \in H$ we have
\beq \label{eq:12}
\lim_{n \to \infty}\|(P_{M_N} \cdots P_{M_2}P_{M_1})^n(x) -  P_M(x)\| = 0 .
\eeq
The algorithm provided by Halperin's result will be called in this paper the \emph{method of cyclic alternating projections}.

A Banach space extension of Halperin's result was proved by Bruck and Reich \cite{bruck/reich}: if $X$ is a \emph{uniformly convex} Banach space and $P_j$, $1\le j \le N$, are $N$ \emph{norm one projections} in $\BX$, then the iterates of $T = P_{N} \cdots P_{2}P_{1}$ are strongly convergent. The strong limit $T^{\infty}$ is a projection of norm one onto the intersection of the ranges of $P_j$. The same result holds \cite{BaLy} if $X$ is \emph{uniformly smooth} and each projection $P_j$ is of norm one. It also holds \cite{BaLy} if $X$ is a reflexive (complex) Banach space and each projection $P_j$ is hermitian (that is, with real numerical range). We refer to \cite{BaLy} and the references therein for other Banach space results of this type.

An interesting extension of the method of cyclic alternating projections is the\emph{ method of random alternating projections}. Let $P_j$, $1\le j \le N$, be $N$ orthogonal projections in $\BH$, $M = \cap_{j=1}^N \Ran(P_j)$, and let $(i_k)_{k\ge 1}$ be a sequence from $\{1,2, \dots, N\}$ (random samples). The method of random alternating projections asks about the convergence of the sequence $(x_n)_{n\ge 0}$ given by $x_0 = x$, $x_n = P_{i_n}x_{n-1}$. It is an open problem to know whether $(x_n)_{n\ge 0}$ is always convergent in the topology of $H$. The convergence of $(x_n)_{n\ge 0}$ in the \emph{weak} topology has been proved by Amemiya and Ando \cite{amemiya/ando}. If each $j$ between $1$ and $N$ occurs infinitely many times in the sequence of random samples, then the weak limit of $(x_n)_{n\ge 0}$ is $P_Mx$. We refer to \cite{DKR,sakai,KKM} for results related to this problem.

\subsection{The rate of convergence} It is important for applications to know how fast
the algorithm given by the method of alternating projections, or
its variations, converge. For $N=2$ a quite complete description
of the rate of convergence is known, it terms of the notion of
\emph{angle} of subspaces.
\begin{definition}(Friedrichs angle) Let $M_1$ and $M_2$ be two closed subspaces of the Hilbert space $H$ with intersection $M = M_1 \cap M_2$. The \emph{Friedrichs angle} between the subspaces $M_1$ and $M_2$ is defined to
be the angle in $[0, \pi/2]$ whose cosine is given by
$$c(M_1,M_2) := \sup\{|\ps{x}{y}| : x \in M_1 \cap M^{\perp}\cap B_H, y \in M_2 \cap M^{\perp}\cap B_H\},$$
where $B_H := \{h \in H : \|h\| \le 1\}$ is the unit ball of $H$. The \emph{minimal angle} (or Dixmier angle) between the subspaces $M_1$ and $M_2$ is defined to
be the angle in $[0, \pi/2]$ whose cosine is given by
$$c_0(M_1,M_2) := \sup\{|\ps{x}{y}| : x \in M_1 \cap B_H, y \in M_2 \cap B_H\}.$$
\end{definition}
We note that $c(M_1,M_2) = c_0(M_1\cap M^{\perp},M_2\cap M^{\perp})$, and that $c_0(M_1,M_2) = 1$ if $M \neq \{0\}$. We also have $c(M_1,M_2) = c(M_1^{\perp},M_2^{\perp})$. We refer to the survey paper \cite{deutsch:survey_angle} for more information about different notions of angle between subspaces of infinite dimensional Hilbert spaces and their properties, and to \cite[Lecture VIII]{nikolski:book1} for different occurences of the Friedrichs angle in functional-theoretical problems.

It was proved by Aronszajn \cite{aro} (upper bound) and by Kayalar and Weinert \cite{kayalar/weinert} (equality) that
$$\|(P_{M_2}P_{M_1})^n -  P_M\| = c(M_1,M_2)^{2n-1} \quad (n \ge 1).$$
This formula shows that the sequence $(T^n)$ of iterates of $T = P_{M_2}P_{M_1}$ converges \emph{uniformly} to $T^{\infty} = P_M$ if and only if $c(M_1,M_2) < 1$, i.e., if the Friedrichs angle between $M_1$ and $M_2$ is
positive. When this happens, the iterates of $T = P_{M_2}P_{M_1}$ converge ``quickly'' (i.e. at
the rate of a geometrical progression) to $T^{\infty} = P_M$, in the following sense:
\smallskip

\begin{itemize}
 \item[(QUC)](quick uniform convergence) there exist $C > 0$ and $\alpha \in ]0,1[$ such that
$$ \|T^n - T^{\infty}\| \le C\alpha^n \quad (n \ge 1).$$
\end{itemize}
\smallskip

It is also known \cite{deutsch:survey} that $c(M_1,M_2) < 1$ if and only if $M_1+M_2$ is closed, if and only if $M_1^{\perp}+M_2^{\perp}$ is closed, if and only if $(M_1\cap M^{\perp})+(M_2\cap M^{\perp})$ is closed.

When $M_1+M_2$ is not closed, we have strong, but not uniform convergence. It was recently proved by Bauschke, Deutsch and Hundal (see \cite{bauschke/deutsch/hundal} for the history of this result) that, given any sequence of
reals decreasing to zero, there exists a point in the space with the property
that the convergence in the method of alternating projections (von Neumann's theorem) is at least as slow as
this sequence of reals. Thus the iterates of the product of two orthogonal projections converge quickly, or arbitrarily slowly. We call this alternative the (QUC)/(ASC) dichotomy : one has quick uniform convergence or arbitrarily slow convergence. We shall consider several meanings of (ASC) in this paper.

The results concerning the rate of convergence in Halperin's
theorem for $N \ge 3$ are not as complete as the results described
above for $N = 2$. We refer to \cite{deutsch:survey,DeHu,XuZi},
\cite[Chapter 9]{deutsch:book} and their references
for several results concerning the rate of convergence in the
method of cyclic alternating projections. For instance,
\cite[Example 3.7]{DeHu} shows that for $N \ge 3$ the error bound for the
method of cyclic alternating projections is not a function of the
various Friedrichs angles $c(M_i,M_j)$ between pairs of subspaces.

\subsection{What this paper is about} The main goal of the present paper is to discuss
the rate of convergence in Halperin's theorem and to generalize
some of the previous known results ($N=2$) to the case of several
subspaces ($N\ge 3$). We show by operator-theoretical methods that
the (QUC)/(ASC) dichotomy always holds as soon as the iterates of
$T$ are strongly convergent. Several interpretations of (ASC) are
proposed, and general dichotomy theorems are obtained in the
Hilbert or Banach space situation, depending on several spectral
properties imposed upon the operator $T$. This implies at once the
dichotomy (QUC)/(ASC) in all above-mentioned generalizations of
the method of alternating projections. We also give a
generalization of the Friedrichs angle to several subspaces,
$c(M_1, \cdots , M_N)$, and prove that condition (QUC) holds in
Halperin's theorem if and only if $c(M_1, \cdots , M_N) < 1$.
Estimates for the error $\|(P_{M_N} \cdots P_{M_2}P_{M_1})^n -
P_M\|$ are given in this case and several statements equivalent to
the condition $c(M_1, \cdots , M_N) < 1$ are obtained. Some of
them are expressed in terms of random products $P_{i_k}\cdots
P_{i_1}$ of projections. More specific descriptions of these
results, and information about how the paper is organized, are
given below.

\subsection{Conditions for arbitrarily slow convergence}
Several dichotomy theorems of the type quick uniform convergence \emph{versus} arbitrarily slow convergence are proved in this paper. The quick uniform condition is the condition (QUC) presented above. We shall consider in Section~2 the following conditions for (ASC):

\begin{itemize}
 \item[(ASC1)](arbitrarily slow convergence, variant 1) for every $\ep > 0$ and every sequence $(a_n)_{n\ge 1}$ of positive numbers such that $\lim_{n\to\infty} a_n = 0$, there exists a vector $x \in X$ such that $\|x\| < \sup_n a_n + \ep$ and $\|T^nx - T^{\infty}x\| \ge a_n$ for all $n$.

 \item[(ASC2)](arbitrarily slow convergence, variant 2) for every sequence $(a_n)_{n\ge 1}$ of positive numbers such that $\lim_{n\to\infty} a_n = 0$, there exists a \emph{dense} subset of points $x \in X$ such that $\|T^nx - T^{\infty}x\| \ge a_n$ for all but a \emph{finite} number of $n$'s.

 \item[(ASC3)](arbitrarily slow convergence, variant 3) for every sequence $(a_n)_{n\ge 1}$ of positive numbers such that $\lim_{n\to\infty} a_n = 0$, there exist two vectors $x \in X$ and $y \in X^*$ (the dual of $X$) such that $\re \ps{T^nx - T^{\infty}x}{y} \ge a_n$ for all $n \ge 1$. Furthermore, if there is a Banach space $Y$ such that $X$ is a (isometrical) subspace of $Y^*$, then the vector $y$ can be chosen in $Y$;
 \item[(ASCH)](arbitrarily slow convergence, Hilbertian version) for every
 $\ep > 0$ and every sequence $(a_n)_{n\ge 1}$ of positive numbers such that
 $\lim_{n\to\infty} a_n = 0$, there exists a vector $x \in H$ such that
 $\|x\| < \sup_n a_n + \ep$ and $\re \ps{T^nx - T^{\infty}x}{x} \ge a_n$ for all $n \ge 1$ .
Here $H$ is supposed to be a complex Hilbert space.
\end{itemize}

\par\smallskip
The dichotomy results of Section~2 are based upon general results
about the existence of large (weak) orbits of operators (see
\cite{muller,muller1,muller2}).

Let us recall here the main result of \cite{muller2} concerning
large weak orbits in the Banach space setting:


\begin{theorem}[\cite{muller2}]\label{thmVlad}
 Let $X$ be a Banach space which does not contain $c_0$, and $T$ a bounded operator on
 $X$ such that $1$ belongs to the spectrum of $T$ and $\|T^nx\|$ tends to zero as
 $n$ tends to infinity for every $x\in X$. Then for any sequence
$(a_n)_{n\geq 0}$ such that $a_n$ tends to zero as $n$ tends to
infinity, there exists a vector $x\in X$ and a functional
$x^{*}\in X^{*}$ such that $\re \langle T^n x, x^{*} \rangle\geq
a_n$ for every $n\geq 0$.
\end{theorem}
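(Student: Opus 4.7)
\medskip
\noindent\textbf{Proof plan.}
My plan is to split the argument according to the location of $1$ in the spectrum of $T$ and, in the principal subcase, to build $x$ as a block-summation of approximate eigenvectors whose convergence is secured by the hypothesis $c_0\not\hookrightarrow X$. Without loss of generality I may assume $a_n\geq 0$. The uniform boundedness principle applied to the pointwise limit $T^n y\to 0$ yields $M:=\sup_n\|T^n\|<\infty$, and the same hypothesis forbids $1$ from being an eigenvalue of $T$. Hence $1\in\sigma(T)$ implies either $1\in\sigma_p(T^*)$ or $1\in\sigma_{\mathrm{ap}}(T)$ (a standard Hahn--Banach argument identifies $\sigma(T)\setminus\sigma_{\mathrm{ap}}(T)$ with a subset of $\sigma_p(T^*)$). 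The first case is immediate: pick $x^*$ with $T^*x^*=x^*$ and $x\in X$ with $\re\ps{x}{x^*}=\sup_n a_n$; then $\re\ps{T^n x}{x^*}=\re\ps{x}{x^*}\geq a_n$ is constant in $n$, and no hypothesis on $X$ is used.

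In the main case $1\in\sigma_{\mathrm{ap}}(T)$, a telescoping inequality
$\|T^j v-v\|\leq jM\|Tv-v\|$ shows that for every $\varepsilon>0$ and every $N\in\N$ one can find a unit vector $v$ with $\|T^jv-v\|<\varepsilon$ for all $0\leq j\leq N$; a norming functional $v^*$ at $v$ then satisfies $\re\ps{T^j v}{v^*}\geq 1-\varepsilon$ on $[0,N]$. I would choose rapidly increasing integers $0=N_0<N_1<\cdots$, weights $\varepsilon_k\downarrow 0$, and pairs $(v_k,v_k^*)$ giving $\re\ps{T^j v_k}{v_k^*}\geq \tfrac12$ on $[0,N_k]$. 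Since $T^j v_i\to 0$ as $j\to\infty$ for each fixed $i$, the $N_k$'s can be enlarged inductively so that $\|T^j v_i\|$ is as small as desired whenever $i<k$ and $j\geq N_{k-1}$, which decouples each $v_k$ on its block $(N_{k-1},N_k]$ from the earlier vectors.

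I would then set $x=\sum_k c_k v_k$ with $c_k$ of order $2\sup_{N_{k-1}<n\leq N_k}a_n$, and build $x^*$ from the $v_k^*$'s analogously, tuning the dual coefficients (and making small biorthogonality-type corrections) so that the cross-terms $\ps{T^n v_i}{v_j^*}$ for $i\neq j$ do not spoil the estimate; the diagonal contribution $c_k\,\re\ps{T^n v_k}{v_k^*}$ then dominates $a_n$ on block $k$. The principal obstacle---and the only place where the hypothesis $c_0\not\hookrightarrow X$ is essential---is the convergence of the series: the coefficients $c_k$ tend to $0$ but not necessarily summably, so to pass from the weakly unconditional summability of the partial sums (which is engineered through the inductive construction) to genuine norm convergence in $X$ one invokes the Bessaga--Pe\l czy\'nski theorem, which precisely rules out the obstruction furnished by an embedded copy of $c_0$. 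Once $x$ and $x^*$ have been produced, the inequality $\re\ps{T^n x}{x^*}\geq a_n$ is verified block by block from the lower bound on the diagonal term and the smallness of the off-diagonal tail and head.
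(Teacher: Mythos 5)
First, a point of reference: the paper does not prove this statement. Theorem~\ref{thmVlad} is imported verbatim from \cite{muller2} and used as a black box, so your attempt can only be measured against M\"uller's original argument, not against anything in the text. Your opening moves are the standard and correct ones: uniform boundedness gives $M=\sup_n\|T^n\|<\infty$; $1$ cannot be an eigenvalue of $T$; it cannot be an eigenvalue of $T^*$ either, since $T^*x^*=x^*$ forces $\langle x,x^*\rangle=\langle T^nx,x^*\rangle\to 0$ for every $x$, hence $x^*=0$ --- so your ``first case'' is vacuous, and the construction you propose there (a pairing constantly equal to $\sup_n a_n>0$ along an orbit tending to zero) is actually incompatible with the hypotheses rather than a use of them. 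Consequently $1\in\sigma_{\mathrm{ap}}(T)$, and the telescoping bound $\|T^jv-v\|\le jM\|Tv-v\|$ does produce, for every $N$ and $\varepsilon$, a unit vector $v$ and a norming functional $v^*$ with $\re\langle T^jv,v^*\rangle\ge 1-\varepsilon$ for $0\le j\le N$. Up to here the sketch is sound.

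The block construction, however, has genuine gaps exactly where the content of the theorem lies. (i) The decoupling you claim --- ``$\|T^jv_i\|$ as small as desired whenever $i<k$ and $j\ge N_{k-1}$'' --- is impossible for $i=k-1$: $v_{k-1}$ must be an approximate fixed point up to time $N_{k-1}$, so it is chosen \emph{after} $N_{k-1}$ and satisfies $\|T^{N_{k-1}}v_{k-1}\|\approx 1$; you cannot enlarge $N_{k-1}$ as a function of $v_{k-1}$. Each $v_i$ therefore has an uncontrolled transition zone invading block $i+1$, where its contribution $c_{k-1}d_{k-1}\re\langle T^nv_{k-1},v_{k-1}^*\rangle$ is bounded only by $Mc_{k-1}d_{k-1}$ in modulus, may be negative, and is at least as large as the diagonal gain $c_kd_k/2$ of the current block. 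Nothing in the sketch beats this interference. (ii) There is a structural warning sign that the difficulty has been misplaced: with your own prescription $c_k\sim 2\sup_{N_{k-1}<n\le N_k}a_n$, one may choose the $N_k$ to grow so fast that both coefficient sequences are summable, both series converge absolutely, and the hypothesis $c_0\not\hookrightarrow X$ is never used --- yet that hypothesis is essential (it cannot be dropped). So either the convergence issue is not where you locate it, or the cross-term analysis is where the real work hides; in either case the appeal to Bessaga--Pe{\l}czy\'nski is not yet an argument, because weak unconditional Cauchyness requires $\sum_k|c_k\langle v_k,y^*\rangle|<\infty$ for \emph{every} $y^*\in X^*$, a condition over an uncountable family that the finitely many near-biorthogonality constraints imposed at each inductive step cannot secure. (iii) The functional $x^*$ is to be ``built analogously'' from the $v_k^*$, but Bessaga--Pe{\l}czy\'nski is unavailable in $X^*$, which may contain $c_0$ even when $X$ does not (e.g.\ $X=\ell^1$); the paper's careful phrasing of (ASC3), where the functional is taken in a predual $Y$ when one exists, indicates that it must be produced by a weak-star compactness argument rather than by norm convergence of a series of norming functionals. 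In sum, your proposal is a reasonable table of contents, but none of the interference and convergence problems that make the theorem nontrivial is actually resolved.
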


We prove in Section 2 that if the iterates of $T \in \BX$ are
strongly convergent, then one has (QUC) or (ASC1). Also, if the
iterates are strongly convergent, then the dichotomy (QUC)/(ASC2)
holds. Condition (QUC) holds if and only if 
$\Ran(\lambda I - T)$, the range of $\lambda I - T$,
is closed for each $\lambda$ in the unit circle $\T$. In the case
when $T \in \BX$ is a power bounded, mean ergodic operator with
spectrum $\sigma(T)$ included in $\D \cup \{1\}$, it is proved
using the Katznelson-Tzafriri theorem \cite{KaTz} that the
iterates of $T$ are strongly convergent. Therefore the previous
dichotomies (QUC)/(ASC1) and (QUC)/(ASC2) apply. Moreover, the
dichotomy (QUC)/(ASC3) holds whenever the Banach space $X$
contains no isomorphic copy of $c_0$. If $X =H$ is a Hilbert
space, then also the dichotomy (QUC)/(ASCH) holds. We prove here
that the (QUC) condition holds if and only if $\Ran(I-T)$ is
closed. Applications to products of projections of norm one are
given. In particular, the dichotomy (QUC)/(ASC) holds, with
several variants of (ASC), for the cases covered by the theorems
of von Neumann, Halperin, Bruck-Reich and those of \cite{BaLy}.

\subsection{ A generalization of the Friedrichs angle}
In order to quantify the rate of convergence in the method of alternating projections, 
an extension of the cosine of Friedrichs angle to several subspaces $(M_1, \dots, M_N)$ will be given in Section~3. It is a parameter $c(M_1, \dots, M_N)$ which lies between $0$ and $1$, defined as follows:
\begin{eqnarray*}
c(M_1, \cdots, M_N) &=& \sup\left\{\frac{2}{N-1}\frac{\sum_{j<k}\re \ps{m_j}{m_k}}{\|m_1\|^2 + \cdots + \|m_N\|^2} : \right.\\
 & & \left. m_j \in M_j\cap M^{\perp}, \|m_1\|^2 + \cdots + \|m_N\|^2 \neq 0 \right\}\\
 &=& \sup\left\{ \frac{1}{N-1}\frac{\sum_{j\neq k} \ps{m_j}{m_k}}{\sum_{j=1}^N \ps{m_j}{m_j}} : \right.\\
 & & \left. m_j \in M_j\cap M^{\perp}, \|m_1\|^2 + \cdots + \|m_N\|^2 \neq 0 \right\} .
\end{eqnarray*}
The fact that this definition coincides with the classical one for
two subspaces will be proved in Lemma~\ref{lemma:FrAsRe}. For
pairwise orthogonal $N$ subspaces (the ``angle'' is $\pi/2$ in this
case) we have $c(M_1, \dots, M_N) = 0$, while the other extremal
case $c(M_1, \dots, M_N) = 1$ corresponds to the case of
arbitrarily slow convergence in the method of cyclic alternating
projections for $N$ subspaces (the ``angle'' is zero). Other
related quantities are considered: the \emph{configuration
constant} $\kappa(M_1, \dots, M_N)$, the \emph{inclination}
$\ell(M_1, \dots, M_N)$, and the Friedrichs angle between the
cartesian product $\bC = M_1 \times \cdots \times M_N \subset H^N$
and the ''diagonal subspace'' $\bD = \mbox{ diag }(H) = \{
(y,\dots, y) : y \in H\} \subset H^N$.

In Section~$4$ we characterize in several ways when the dichotomy (QUC)/(ASC) arises. The characterizations are in terms of geometric properties of $(M_1,\cdots,M_N)$, of spectral properties of $T$, or of random products $P_{i_k}\cdots P_{i_1}$. We give an estimate for the geometric convergence of $\|T^n-P_M\|$ to zero when $c(M_1, \dots, M_N) < 1$.

\section{General dichotomy theorems and applications}

\subsection{Dichotomy theorems}
\begin{theorem}[(QUC)/(ASC1) and (QUC)/(ASC2)]\label{thm:21}
 Let $X$ be a Banach space and let $T \in \BX$ be such that the sequence of iterates $(T^n)$ is strongly convergent to $T^{\infty}\in\BX$. Then the following dichotomy holds : either (QUC), or (ASC1).
The quick uniform convergence (condition (QUC)) holds if and only if
\beq\label{eq:closed}
 \textrm{ for every  } \ld \in \T, \Ran(\ld -T)  \textrm{ is closed. }
\eeq
In these statements, the condition (ASC1) can be replaced by (ASC2).
\end{theorem}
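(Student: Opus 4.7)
The plan is to reduce to the operator $S := T - T^{\infty}$ (which satisfies $S^n\to 0$ strongly) and to feed the resulting problem into a single-vector slow-orbit theorem of the type established in \cite{muller,muller1,muller2}. I would first observe, by passing to the limit in $T^{n+1} = T T^n = T^n T$, that $T^{\infty}$ is a bounded projection with $T T^{\infty} = T^{\infty} T = T^{\infty}$; hence $S^n = T^n - T^{\infty}$ for every $n \ge 1$, $S^n \to 0$ strongly, and $S$ is power bounded. The resulting topological direct sum $X = \Ker T^{\infty} \oplus \Ran T^{\infty}$ makes $T$ split as the identity on $\Ran T^{\infty}$ and as $T_0 := T|_{\Ker T^{\infty}}$ on the other summand, so $\|T^n - T^{\infty}\|$ and $\|T_0^n\|$ are comparable, $T_0^n \to 0$ strongly, and for each $\ld \in \T$, $\Ran(\ld - T)$ is closed iff $\Ran(\ld - T_0)$ is closed.

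To establish (QUC) $\Leftrightarrow$ \eqref{eq:closed}, note first that (QUC) is equivalent to the spectral radius condition $r(T_0) < 1$, hence to $\sigma(T_0) \cap \T = \emptyset$, which trivially forces $\ld - T_0$ to be invertible (and so to have closed range) for every $\ld \in \T$. For the converse, I would argue by contradiction: suppose $\ld_0 \in \sigma(T_0) \cap \T$ has $\Ran(\ld_0 - T_0)$ closed. Since $T_0^n \to 0$ strongly, $T_0$ carries no eigenvalue of modulus $1$, so $\ld_0 - T_0$ is injective with closed range; as $\ld_0 \in \sigma(T_0)$ it is not surjective, whence $\Ker(\ld_0 - T_0^*) = \Ran(\ld_0 - T_0)^{\perp} \neq 0$. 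Any nonzero $y$ in this annihilator satisfies $\ps{T_0^n x}{y} = \ld_0^n \ps{x}{y}$, and passing to the limit $T_0^n x \to 0$ forces $\ps{x}{y} = 0$ for every $x$, i.e.\ $y = 0$, a contradiction.

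If (QUC) fails, the previous paragraph supplies $\sigma(T_0) \cap \T \neq \emptyset$, and since $\sigma(T_0) \subset \overline{\D}$ this intersection lies in the topological boundary of the spectrum, hence in the approximate point spectrum of $T_0$. The key input I would invoke (a single-vector analogue of Theorem \ref{thmVlad}, drawn from \cite{muller,muller1,muller2}) is: \emph{a power-bounded operator $T_0$ with $T_0^n \to 0$ strongly and an approximate eigenvalue on $\T$ admits, for every $\ep > 0$ and every $(a_n)\to 0$, a vector $x$ with $\|x\| < \sup_n a_n + \ep$ and $\|T_0^n x\| \ge a_n$ for all $n$}. Viewing such an $x$ inside $\Ker T^{\infty} \subset X$ yields $\|T^n x - T^{\infty} x\| = \|T_0^n x\| \ge a_n$, i.e.\ (ASC1); mutual exclusivity with (QUC) is clear since (QUC) implies $\|T^n x - T^{\infty} x\| \le C\alpha^n \|x\|$, which cannot dominate a general $(a_n) \to 0$.

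For (ASC2), I would upgrade (ASC1) by a soft perturbation argument. Given $(a_n)\to 0$, $y\in X$ and $\delta > 0$, choose $N$ so that $\sup_{n\ge N}\bigl(a_n + \|S^n y\|\bigr) < \delta/2$, define $\tfi_n := a_n + \|S^n y\|$ for $n \ge N$ and $\tfi_n := \delta/(4N)$ otherwise (so $\tfi_n \to 0$ and $\sup_n \tfi_n < \delta/2$), then apply (ASC1) with $\ep := \delta/2$ to the sequence $(\tfi_n)$ to obtain $z$ with $\|z\| < \delta$ and $\|S^n z\| \ge \tfi_n$ for all $n$. Then $x := y + z$ satisfies $\|x - y\| < \delta$ and, for $n \ge N$, $\|T^n x - T^{\infty} x\| \ge \|S^n z\| - \|S^n y\| \ge a_n$, producing the required dense subset. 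The main obstacle is the single-vector slow-orbit theorem invoked in the third paragraph; the spectral reduction and the density upgrade are both routine once it is available.
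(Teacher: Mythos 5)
Your proof is correct and follows essentially the same route as the paper: the same decomposition $X=\Ker(T-I)\oplus\overline{\Ran(T-I)}$ (you obtain it directly from the limit projection $T^{\infty}$, noting $\Ran T^{\infty}=\Ker(T-I)$ and $\Ker T^{\infty}=\overline{\Ran(T-I)}$, rather than by citing mean ergodicity and Krengel), the same reduction to the restriction $T_0$ on the complementary summand, the same case split on whether $r(T_0)<1$, and the same external engine for the hard case --- the single-vector slow-orbit theorem you flag as the main dependency is exactly the result the paper cites, namely \cite[Thm 14, p.~333]{muller}. Two sub-steps differ and are worth noting. For the equivalence of (QUC) with the closed-range condition \eqref{eq:closed}, the paper excludes a unimodular $\ld\in\sigma(A)\cap\T$ with $\Ran(A-\ld)$ closed via semi-Fredholm index continuity (or, in Remark~\ref{rem:2}, via the approximate point spectrum); your Hahn--Banach argument --- a nonzero annihilator $y$ of the proper closed range satisfies $\ps{T_0^nx}{y}=\ld_0^n\ps{x}{y}$, which strong convergence to zero forces to vanish --- is more elementary and equally valid. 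For (ASC2), the paper simply cites the same reference as for (ASC1), whereas you derive (ASC2) from (ASC1) by perturbing an arbitrary $y$ to $x=y+z$ with $\|S^nz\|\ge a_n+\|S^ny\|$ for large $n$; this upgrade is correct and self-contained. No gaps.
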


\begin{proof}
Suppose that
 the sequence of iterates $(T^n)_{n\ge 0}$ is strongly convergent to $T^{\infty} \in \BX$. Then $T$ is \emph{mean ergodic}, i.e., the Ces\`aro means $(I+T+ \cdots +T^{n-1})/n$ are strongly convergent. Therefore (\cite[page 73]{krengel}) the space $X$ can be decomposed as the direct sum of the kernel of $T-I$ and the closure of the range of the same operator, $X = \Ker(T-I) \oplus \overline{\Ran(T-I)}$. Moreover, $T^{\infty}$ is the projection onto $\Ker(T-I)$ along $\overline{\Ran(T-I)}$. Notice also that $T^{\infty}$ acts on the space $\Ker(T-I)$ as the identity.
With respect to the decomposition $X = \Ker(T-I) \oplus \overline{\Ran(T-I)}$ we can write
$$
T = \left(
\begin{array}{cc}
T^{\infty} & 0 \\
0 & A \\
\end{array}
\right)
$$
for some $A \in \mathcal{B}(\overline{\Ran(T-I)})$. It is not difficult to prove that for every $\ld \in \C$, the range $\Ran(T-\ld I)$ is closed if and only if $\Ran(A-\ld I)$ is. The strong convergence of $T^n$ and the Banach-Steinhaus theorem imply that $T$ is \emph{power bounded}, that is $\sup_{n\ge 1}\|T^n\| < \infty$. Thus $\sigma(T)$, the spectrum of $T$, is included in the closed unit disk. As $\sigma(T) = \{1\} \cup \sigma(A)$, the same inclusion holds for $\sm(A)$. In particular, the spectral radius of $A$ verifies $r(A) \le 1$.

We distinguish two cases.

\smallskip

\noindent \emph{Case (1).} We have $r(A) < 1$.
Notice that we have
\beq\label{eq:Tn}
T^n - T^{\infty} = \left(
\begin{array}{cc}
0 & 0 \\
0 & A^n \\
\end{array}
\right).
\eeq
Since $r(A) < 1$, there exist $C > 0$ and $\alpha \in ]0,1[$ such that
$$ \|A^n\| \le C\alpha^n \quad (n \ge 1).$$
This estimate and \eqref{eq:Tn} gives the quick uniform convergence condition (QUC).

\smallskip

\noindent \emph{Case (2).} We have $r(A) = 1$. Recall that $\|A^ny\| \to 0$ as $n\to \infty$, for each $y \in \overline{\Ran(T-I)}$. The conditions (ASC1) and (ASC2) follow now from \cite[Thm 14, p. 333]{muller}.

\smallskip

Suppose that Case (1) is fulfilled, i.e. $r(A) < 1$. Then $A-\ld$ is invertible for every $\ld \in \T$. In particular, $\Ran(A-\ld) = \overline{\Ran(T-I)}$ is closed for each $\ld \in \T$. Thus $\Ran(T-\ld)$ is also closed, for each $\ld \in \T$.

Suppose now that all subspaces $\Ran(T-\ld)$, $\ld \in \T$, and so all $\Ran(A-\ld)$, $\ld \in \T$, are closed. Then $r(A) < 1$. Indeed, suppose that $r(A) = 1$ and let $\ld \in \T \cap \sm(A)$ be a point in the unimodular spectrum of $A$. Then the condition $\|A^ny\| \to 0$ as $n\to \infty$ for each $y$ shows that $\ld$ cannot be an eigenvalue: if $Ay = \ld y$, then $y = 0$. Indeed, we have $\|y\| = \|\ld^{-n}A^ny\| = \|A^ny\| \to 0$ as $n \to \infty$. Thus $\ld \in \sm(A) \setminus \sm_p(A)$ and $\Ran(A-\ld)$ is closed. Therefore $A-\ld$ is an upper semi-Fredholm operator. As $A-\ld I$ is a limit of invertible operators $A-\frac{n+1}{n}\ld I$, the index $\textrm{ ind}(A-\ld I)$ of $A-\ld I$ is $0.$ Hence $A-\ld I$ is invertible, a contradiction with the assumption that $\ld \in \sm(A)$. Thus $r(A) < 1$.
\end{proof}

\begin{remark}\label{rem:2}
The following is a different argument for the last part of the proof, without the use of Fredholm theory. As $\ld \in \sm(A) \setminus \sm_p(A)$ and $\Ran(A-\ld)$ is closed, the operator $A-\ld$ is lower bounded, and thus $\ld$ is not in the approximate point spectrum of $A$. As every point in the boundary of the spectrum is in the approximate point spectrum, we obtain the desired contradiction. We refer the reader to \cite{muller} as a basic reference for the spectral theory of linear operators we are using in the present paper.
\end{remark}

\begin{theorem}[(QUC)/(ASC3) and (QUC)/(ASCH)]\label{thm:22}
 Let $X$ be a Banach space and let $T \in \BX$ be a power bounded, mean ergodic operator with spectrum $\sigma(T)$ included in $\D \cup \{1\}$. Then the sequence of iterates $T^n$ is strongly convergent to a certain operator $T^{\infty} \in \BX$, and the dichotomies of Theorem \ref{thm:21} apply. Moreover, the dichotomy (QUC)/(ASC3) holds whenever $X$ contains no isomorphic copy of $c_0$. If $X =H$ is a Hilbert space, then also the dichotomy (QUC)/(ASCH) holds.

In all these statements, the quick uniform convergence condition (QUC) holds if and only if
\beq\label{eq:closed2}
 \Ran(I -T)  \textrm{ is closed. }
\eeq
\end{theorem}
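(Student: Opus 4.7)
My plan is to bootstrap Theorem~\ref{thm:21}: first deduce the strong convergence $T^n\to T^{\infty}$ from the Katznelson--Tzafriri theorem, and then handle (ASC3) and (ASCH) by applying the large weak orbit results of \cite{muller2} to the restriction of $T$ to $\overline{\Ran(I-T)}$.

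Since $T$ is mean ergodic, one has the splitting $X=\Ker(I-T)\oplus\overline{\Ran(I-T)}$ with $T^{\infty}$ the associated projection (\cite[p.~73]{krengel}). The convergence $T^n x\to T^{\infty} x$ is trivial on $\Ker(I-T)$, where $T$ acts as the identity. On $\overline{\Ran(I-T)}$, power boundedness together with $\sigma(T)\cap\T\subseteq\{1\}$ lets Katznelson--Tzafriri \cite{KaTz} give $\|T^{n+1}-T^n\|\to 0$: for $x=(I-T)y$ this yields $T^n x=-(T^{n+1}-T^n)y\to 0$, and density plus power boundedness extend the conclusion to all of $\overline{\Ran(I-T)}$. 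Thus $T^n\to T^{\infty}$ strongly and Theorem~\ref{thm:21} immediately supplies the (QUC)/(ASC1) and (QUC)/(ASC2) dichotomies. The criterion ``(QUC) iff $\Ran(I-T)$ is closed'' is likewise immediate: under $\sigma(T)\subseteq\D\cup\{1\}$ the operator $\ld-T$ is invertible for every $\ld\in\T\setminus\{1\}$, so condition \eqref{eq:closed} collapses to \eqref{eq:closed2}.

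For the remaining dichotomies, write $T=T^{\infty}\oplus A$ relative to the splitting above, with $A=T|_{K}$ on $K:=\overline{\Ran(I-T)}$. As in the last part of the proof of Theorem~\ref{thm:21}, failure of (QUC) forces $r(A)=1$, and together with $\sigma(A)\subseteq\D\cup\{1\}$ this gives $1\in\sigma(A)$; moreover $A^n y\to 0$ strongly on $K$. For (ASC3), $K$ inherits from $X$ the absence of an isomorphic copy of $c_0$, so Theorem~\ref{thmVlad} applied to $A$ yields $y\in K$ and $y^{*}\in K^{*}$ with $\re\ps{A^n y}{y^{*}}\ge a_n$; extending $y^{*}$ to $x^{*}\in X^{*}$ by Hahn--Banach and using $T^n y-T^{\infty} y=A^n y$ gives the required $x=y$ and $x^{*}$. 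The predual refinement when $X$ is isometrically embedded in some $Y^{*}$ calls upon the analogous predual variant of Theorem~\ref{thmVlad} from \cite{muller2}, obtained there by a weak$^{*}$ approximation argument.

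For (ASCH), $K$ is a Hilbert space and I invoke the Hilbertian version of Theorem~\ref{thmVlad} in \cite{muller2}: under $1\in\sigma(A)$, power boundedness and $A^n y\to 0$, it produces, for any $\ep>0$ and any $(a_n)$ decreasing to $0$, a \emph{single} vector $y\in K$ with $\|y\|<\sup_n a_n+\ep$ and $\re\ps{A^n y}{y}\ge a_n$. Setting $x=y\in H$ and using $T^{\infty} y=0$ transfers the inequality to $\re\ps{T^n x-T^{\infty} x}{x}\ge a_n$ with the desired norm bound. The main obstacle is concentrated in this last step: (ASC3) and especially (ASCH) demand a single vector realizing a prescribed lower bound on the orbit---with the same vector on both sides of the pairing in the Hilbert case---which is strictly stronger than what mean ergodicity alone yields and is precisely the content of the results of \cite{muller2} being invoked.
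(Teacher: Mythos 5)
Your proof is correct and follows essentially the same route as the paper: the Krengel splitting plus Katznelson--Tzafriri for strong convergence, reduction to $A=T|_{\overline{\Ran(I-T)}}$ with $1\in\sigma(A)$ when (QUC) fails, and the large weak orbit theorems for (ASC3) and (ASCH). The only (harmless) divergence is bibliographic: the paper derives (ASCH) from \cite[Theorem 2]{muller1} rather than a Hilbertian variant in \cite{muller2}, and your explicit Hahn--Banach extension and the collapse of condition \eqref{eq:closed} to \eqref{eq:closed2} are details the paper leaves implicit.
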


\begin{proof}
Again, using the mean ergodicity and \cite[page 73]{krengel}, the space $X$ can be decomposed as the direct sum $X = \Ker(T-I) \oplus \overline{\Ran(T-I)}$. According to the Katznelson-Tzafriri theorem \cite{KaTz}, the power boundedness condition and the spectral condition $\sigma(T) \subset \D \cup \{1\}$ imply $\lim_{n\to \infty} \|T^{n+1} - T^n\| = 0$. This shows that the sequence of iterates $(T^n)$ of $T$ converges strongly to $0$ on the range of $T-I$. The same holds for the closure $\overline{\Ran(T-I)}$. As $T$ acts like identity on $\Ker(T-I)$, we get that $(T^n)_{n\ge 0}$ converges strongly to $T^{\infty}$, the projection onto $\Ker(T-I)$ along $\overline{\Ran(T-I)}$. Thus we can apply Theorem \ref{thm:21} to obtain the dichotomies (QUC)/(ASC1) and (QUC)/(ASC2).

Let us show that (QUC)/(ASC3) also holds if $X$ contains no
isomorphic copy of $c_0$. Using the notation of the proof of
Theorem \ref{thm:21}, if the condition (QUC) is not satisfied,
then $r(A) = 1$ (Case (2) in the proof of Theorem \ref{thm:21}).
As $\sm(T) \subset \D \cup \{1\}$, the same inclusion holds for
the spectrum of $A$. Therefore $1 \in \sm(A)$. Remark also that
$\|A^ny\| \to 0$ as $n \to \infty$ since $(T^n)$ converges
strongly to $T^{\infty}$. We can now apply Theorem \ref{thmVlad}
provided that $X$ contains no isomorphic copy of $c_0$. To obtain
the dichotomy (QUC)/(ASCH) if $X =H$ is a Hilbert space, we use
\cite[Theorem 2]{muller1} (see also \cite[Theorem
1]{badea/muller:topol} for the case of weak convergence).
\end{proof}

\subsection{ Applications to the method of alternating projections}
We introduce first some notation, and recall for the convenience of the reader some Banach space terminology. Let $N \ge 2$. Let $X$ be a Banach space and let $P_1, \cdots, P_N$ be $N$ fixed projections ($P_j^2 = P_j$) acting on $X$. We denote by $\pocc = \pocc(P_1,\cdots,P_N)$ the convex multiplicative semigroup generated by $P_1, \cdots, P_N$. Recall that this is the convex hull of the set of all products with factors from $P_1, \cdots, P_N$, and that the convex hull of every multiplicative semigroup of operators is a semigroup.

The space $X$ is said to be \emph{uniformly convex} if for every
$\ep \in (0,1)$ there exists $\delta \in (0,1)$ such that for any
two vectors, $x$ and $y$,  with $\|x\|\le 1$ and $\|y\|\le 1$,
$\|x+y\|/2>1-\delta$ implies $\|x-y\|<\ep$. An (equivalent)
definition of a \emph{uniformly smooth} Banach space is the
following: $X$ is uniformly smooth if its dual, $X^*$, is
uniformly convex. We refer to \cite{LT} for more information.

We call $P\in \BX$ a \emph{norm one projection} (non-zero orthoprojection) if $P^2 = P$ and $\|P\| = 1$. A self-adjoint projection in a Hilbert space is called, as usual, an \emph{orthogonal projection}. Recall that an operator $T$ on
a Banach space $X$ is called \emph{hermitian} if its numerical range is
real. This is equivalent to ask that $\|\exp(itT)\| = 1$ for every
real $t$. Hermitian operators on Hilbert spaces coincide with the self-adjoint ones; see for instance \cite{berkson} and the references therein.

\begin{theorem}\label{thm:appl_dic}
Let $N \ge 2$. Let $X$ be a complex Banach space, and let $P_1, \cdots, P_N$ be $N$ projections on $X$. Let $T$ be an operator in $\pocc(P_1,\cdots,P_N)$. If one of the following conditions below holds true, then the sequence of iterates of $T$ converges strongly and every dichotomy (QUC)/(ASC1), (QUC)/(ASC2), (QUC)/(ASC3) and (QUC)/(ASCH) (if $X=H$ is a Hilbert space) applies:
\begin{itemize}
 \item[(i)] the space $X$ is uniformly convex and each $P_j$, $1\le j \le N$, is a norm one projection;

   \item[(ii)] the space $X$ is uniformly smooth, and each $P_j$, $1\le j \le N$, is a norm one projection;

\item[(iii)] the space $X$ is reflexive and for each $j$ there
exists $r_j$ with $0 <r_j < 1$ such that $\|P_j - r_jI\| \le
1-r_j$. In particular, this holds if each $P_j$ is hermitian,
$1\le j \le N$.
\end{itemize}
\end{theorem}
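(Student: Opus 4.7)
The plan is to verify, in each of the three settings (i)--(iii), the hypotheses of Theorem~\ref{thm:22}: that $T$ is power bounded, mean ergodic, and satisfies $\sigma(T) \subset \D \cup \{1\}$. Once these are in place, Theorem~\ref{thm:22} delivers the strong convergence of $(T^n)$ together with the dichotomies (QUC)/(ASC3) and, when $X=H$, (QUC)/(ASCH), since each space under consideration is reflexive (uniform convexity and uniform smoothness both imply reflexivity, and (iii) assumes it outright) and therefore contains no isomorphic copy of $c_0$. The remaining dichotomies (QUC)/(ASC1) and (QUC)/(ASC2) then follow from Theorem~\ref{thm:21} applied to the strongly convergent sequence $(T^n)$.

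Power boundedness is immediate: each $P_j$ is a contraction in all three cases (for (iii) this comes from $\|P_j\| \le \|P_j - r_jI\| + r_j \le 1$), and $T$, being a convex combination of products of contractions, is itself a contraction. Mean ergodicity is then automatic, since on a reflexive Banach space every power-bounded operator is mean ergodic.

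The heart of the argument is the spectral inclusion $\sigma(T) \cap \T \subset \{1\}$. Since $\sigma(T) \subset \overline{\D}$ and the topological boundary of the spectrum is contained in the approximate point spectrum, it is enough to rule out approximate eigenvalues on $\T \setminus \{1\}$. In case (i), given $\ld \in \T$ and unit vectors $(x_n)$ with $\|Tx_n - \ld x_n\| \to 0$, I would expand $T = \sum_i \alpha_i Q_i$ as a convex combination of products of the $P_j$'s, and use the following consequence of uniform convexity: if $P$ is a norm-one projection, $\|x_n\|=1$, and $\|Px_n\|\to 1$, then $\|Px_n - x_n\| \to 0$ (applied to the pair $(x_n, Px_n)$ after verifying that $\|(x_n + Px_n)/2\| \to 1$). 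Propagating this fact factor by factor inside each $Q_i$ yields $\|P_j x_n - x_n\| \to 0$ for every $P_j$ appearing in any of the products, hence $\|Tx_n - x_n\| \to 0$, which forces $\ld = 1$. Case (ii) reduces to case (i) by passing to $T^*$: this operator is a contraction on the uniformly convex dual $X^*$, lies in the semigroup generated by the norm-one projections $P_j^*$, and satisfies $\sigma(T^*) = \sigma(T)$. Case (iii) is handled by a parallel approximate-eigenvector argument in which the ball condition $\|P_j - r_jI\| \le 1-r_j$ (which in particular covers the hermitian case, where $\|P_j - \tfrac12 I\| \le \tfrac12$) substitutes for uniform convexity, building on the Banach-space strong-convergence results for alternating projections recalled in the introduction.

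The main obstacle is the spectral step in case (iii). Since neither $X$ nor $X^*$ need be uniformly convex, the clean norm-one-projection implication that drives case (i) is unavailable; the required control of the boundary spectrum of $T$ has to be extracted directly from the geometric content of the ball condition $\|P_j - r_jI\| \le 1-r_j$, or equivalently, in the hermitian specialization, from the fact that $\|\exp(itP_j)\|=1$ for every real $t$, and this is the technically most delicate point of the proof.
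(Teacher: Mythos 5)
The paper's own proof is essentially a citation: it invokes the results of \cite{BaLy}, which establish in all three situations both that $\sigma(T)\subset\D\cup\{1\}$ and that the iterates of $T$ converge strongly, and then applies Theorems \ref{thm:21} and \ref{thm:22}. Your plan of instead verifying the hypotheses of Theorem \ref{thm:22} directly (power boundedness, mean ergodicity via reflexivity and the classical mean ergodic theorem, and the spectral inclusion) is a legitimate alternative route, and for cases (i) and (ii) your sketch is essentially sound. One small point you gloss over in case (i): before propagating the norm-one-projection fact factor by factor inside each product $Q_i$, you must first pass from $\|Tx_n\|\to 1$ to $\|Q_ix_n\|\to 1$ for each product appearing in the convex combination $T=\sum_i\alpha_iQ_i$; this again uses uniform convexity (strict convexity of the norm on the convex hull of the $Q_ix_n$), and should be stated, but it is within the same circle of ideas and not a real obstruction.

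The genuine gap is case (iii), and you acknowledge it yourself without closing it. The spectral inclusion $\sigma(T)\cap\T\subset\{1\}$ under the hypothesis $\|P_j-r_jI\|\le 1-r_j$ on a reflexive space does not follow from your approximate-eigenvector scheme: if $\|x_n\|=1$ and $\|P_jx_n\|\to 1$, writing $P_jx_n=r_jx_n+w_n$ with $\|w_n\|\le 1-r_j$ gives near-equality in the triangle inequality, which in a general (merely reflexive) Banach space does not force $w_n\approx(1-r_j)x_n$, so you cannot conclude $\|P_jx_n-x_n\|\to 0$. Your proposed fallback --- ``building on the Banach-space strong-convergence results for alternating projections recalled in the introduction'' --- is circular for the purposes of this theorem, since those results are precisely the statements of \cite{BaLy} that the paper's proof cites and that you set out to reprove. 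As written, the proposal therefore proves the theorem only under hypotheses (i) and (ii); hypothesis (iii), including the hermitian case, is left unproved unless you either supply the missing spectral argument or explicitly appeal to \cite{BaLy} as the paper does.
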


\begin{proof}
It was proved in \cite{BaLy} that in all three situations the spectrum of $T \in \pocc(P_1,\cdots,P_N)$ is included in $\D \cup \{1\}$ and that the iterates of $T$ are strongly convergent. We apply the above dichotomy theorems. Notice that uniformly convex and uniformly smooth Banach spaces are reflexive, and that reflexive Banach spaces contain no copies of $c_0$.
\end{proof}

\section{A generalization of Friedrichs angle for $N$ subspaces}
As mentioned in the introduction, the rate of convergence in the method of alternating projections for two closed subspaces $M_1$ and $M_2$ is controlled by the Friedrichs angle $c(M_1,M_2)$. We introduce and study in this section a generalization of Friedrichs angle for $N$ subspaces.

\subsection{Definition}
In order to introduce our generalization of the cosine of the Friedrichs angle to several closed subspaces, we start by giving an equivalent definition of the Friedrichs angle $c(M_1,M_2)$.

\begin{lemma}\label{lemma:FrAsRe}
(a) \quad Let $M_1$ and $M_2$ be two closed subspaces of $H$. Then
 \begin{eqnarray*}
 c_0(M_1,M_2) &=& \sup \left\{ \frac{2\re \ps{m_1}{m_2}}{\|m_1\|^2+\|m_2\|^2} : m_1 \in M_1, m_2 \in M_2, (m_1,m_2) \neq (0,0)\right\}.
 \end{eqnarray*}
(b) \quad Let $M_1$ and $M_2$ be two closed subspaces in $H$. Then
 \begin{eqnarray*}
 c(M_1,M_2)&=& \sup\left\{\frac{2\re \ps{m_1}{m_2}}{\|m_1\|^2+\|m_2\|^2} : m_j \in M_j\cap M^{\perp}, (m_1,m_2) \neq (0,0)\right\} \\
&=& \sup \left\{ \frac{\ps{m_1}{m_2}+\ps{m_2}{m_1}}{\ps{m_1}{m_1}+\ps{m_2}{m_2}} : m_j \in M_j\cap M^{\perp}, (m_1,m_2) \neq (0,0)\right\}.
 \end{eqnarray*}
\end{lemma}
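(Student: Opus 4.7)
The plan is to prove (a) first, since (b) then follows immediately from the identity $c(M_1,M_2)=c_0(M_1\cap M^{\perp},M_2\cap M^{\perp})$ noted just after the definition of the Friedrichs angle, applied to the closed subspaces $M_1\cap M^{\perp}$ and $M_2\cap M^{\perp}$. The second displayed formula in (b) is a direct rewriting, using $\langle m_1,m_2\rangle+\langle m_2,m_1\rangle=2\re\langle m_1,m_2\rangle$ and $\langle m_i,m_i\rangle=\|m_i\|^2$.

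For the upper bound in (a), write $\alpha$ for the supremum on the right-hand side. Whenever $m_1\in M_1$ and $m_2\in M_2$ are both nonzero, the AM-GM inequality gives $\|m_1\|^2+\|m_2\|^2\geq 2\|m_1\|\|m_2\|$, hence
\[
\frac{2\re\langle m_1,m_2\rangle}{\|m_1\|^2+\|m_2\|^2}\leq\frac{|\langle m_1,m_2\rangle|}{\|m_1\|\|m_2\|}\leq c_0(M_1,M_2).
\]
If one of $m_1,m_2$ is zero (and the pair is not $(0,0)$), the ratio vanishes, so $\alpha\leq c_0(M_1,M_2)$.

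For the reverse inequality, I would fix arbitrary unit vectors $x\in M_1$ and $y\in M_2$, and use the fact that $M_1$ is a \emph{complex} subspace to perform a unimodular rotation. Writing $\langle x,y\rangle=re^{i\theta}$ with $r\geq 0$, set $m_1:=e^{-i\theta}x\in M_1$ and $m_2:=y\in M_2$. Then $\langle m_1,m_2\rangle=e^{-i\theta}\langle x,y\rangle=r=|\langle x,y\rangle|$ is real and nonnegative, $\|m_1\|=\|m_2\|=1$, so
\[
\frac{2\re\langle m_1,m_2\rangle}{\|m_1\|^2+\|m_2\|^2}=|\langle x,y\rangle|.
\]
Taking the supremum over $x,y$ in the respective unit balls yields $c_0(M_1,M_2)\leq\alpha$.

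I do not expect a serious obstacle here; the only subtle point is the use of complex scalar multiplication to absorb the phase of $\langle x,y\rangle$, which is what allows $\re\langle m_1,m_2\rangle$ to recover $|\langle x,y\rangle|$ rather than just its real part. This step would fail over $\mathbb{R}$, but the paper works throughout with a complex Hilbert space, so it applies.
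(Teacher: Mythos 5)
Your proof is correct and uses essentially the same argument as the paper: AM--GM for the upper bound and a unimodular phase rotation of one vector for the lower bound. The only difference is organizational --- you prove part (a) and deduce (b) via $c(M_1,M_2)=c_0(M_1\cap M^{\perp},M_2\cap M^{\perp})$, whereas the paper proves the first equality of (b) directly (and omits the rest as routine) --- which if anything makes your write-up slightly more complete.
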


\begin{proof} We give the proof only for the first equality of the second part.
Denote by $s$ the first supremum from the statement of part (b).
For every admissible pair $(m_1,m_2)$ with $(m_1,m_2) \neq (0,0)$
we have

\begin{eqnarray*}
\frac{2}{\|m_1\|^2+\|m_2\|^2}\re \ps{m_1}{m_2} &\le &\frac{1}{\|m_1\|\cdot\|m_2\|}\re \ps{m_1}{m_2} \\
 &\le& \frac{|\ps{m_1}{m_2}|}{\|m_1\|\cdot\|m_2\|} \\
 &\le& c(M_1,M_2).
\end{eqnarray*}
Therefore $s \le c(M_1,M_2)$.

For the reverse inequality, let $\ep > 0$. Then there exist two elements $x_1 \in M_1\cap M^{\perp}$ and $x_2 \in M_2\cap M^{\perp}$ with $\|x_1\| = 1$ and $\|x_2\| = 1$ such that $c(M_1,M_2) < |\ps{x_1}{x_2}|+\ep$. Let $\theta \in \R$ be such that $\ps{x_1}{x_2} = |\ps{x_1}{x_2}|e^{i\theta}$, and set $m_1 = e^{-i\theta}x_1$ and $m_2 = x_2$. Then $m_1 \in M_1\cap M^{\perp}$, $m_2 \in M_2\cap M^{\perp}$ and $\|m_1\| = 1$, $\|m_2\| = 1$. We obtain
$$ s \ge \frac{2\re \ps{m_1}{m_2}}{\|m_1\|^2+\|m_2\|^2} = \re \ps{e^{-i\theta}x_1}{x_2} = |\ps{x_1}{x_2}| > c(M_1,M_2) - \ep .$$
As $\ep$ is arbitrary, we obtain $s = c(M_1,M_2)$.
\end{proof}

\begin{definition}
Let $N \ge 2$. Let $M_1, \cdots, M_N$ be $N$ closed subspaces of $H$ with intersection $M = M_1\cap \cdots \cap M_N$. The \emph{Dixmier number} associated to $(M_1, \cdots, M_N)$ is defined as
\begin{eqnarray*}
c_0(M_1, \cdots, M_N) &=& \sup\left\{ \frac{2}{N-1}\frac{\sum_{j<k}\re \ps{m_j}{m_k}}{\|m_1\|^2 + \cdots + \|m_N\|^2} : \right.\\
 & & \left. m_j \in M_j, \|m_1\|^2 + \cdots + \|m_N\|^2 \neq 0 \right\}.
\end{eqnarray*}
The \emph{Friedrichs number} $c(M_1, \cdots, M_N)$ associated to $(M_1, \cdots, M_N)$ is defined as
\begin{eqnarray*}
c(M_1, \cdots, M_N) &=& \sup\left\{\frac{2}{N-1}\frac{\sum_{j<k}\re \ps{m_j}{m_k}}{\|m_1\|^2 + \cdots + \|m_N\|^2} : \right.\\
 & & \left. m_j \in M_j\cap M^{\perp}, \|m_1\|^2 + \cdots + \|m_N\|^2 \neq 0 \right\}\\
 &=& \sup\left\{ \frac{1}{N-1}\frac{\sum_{j\neq k} \ps{m_j}{m_k}}{\sum_{j=1}^N \ps{m_j}{m_j}} : \right.\\
 & & \left. m_j \in M_j\cap M^{\perp}, \|m_1\|^2 + \cdots + \|m_N\|^2 \neq 0 \right\}.
\end{eqnarray*}
\end{definition}

\subsection{Other parameters and properties of the Friedrichs number.}
We found convenient to introduce the following parameters, called the (reduced or not) configuration constants, although they can be expressed in terms of the Dixmier and Friedrichs numbers (see Proposition \ref{prop:FRconf}, (f)).

\begin{definition}
Let $N \ge 2$. Let $M_1, \cdots, M_N$ be $N$ closed subspaces of $H$ with intersection $M = M_1\cap \cdots \cap M_N$. The number
$$ \kappa_0(M_1, \cdots, M_N) = \sup\left\{ \frac{1}{N} \frac{\|\sum_{j=1}^N m_j\|^2}{\sum_{j=1}^N\|m_j\|^2} : m_j \in M_j, \|m_1\|^2 + \cdots + \|m_N\|^2 \neq 0 \right\} $$
is called the \emph{non-reduced configuration constant} of $(M_1, \cdots, M_N)$. The number
$$ \kappa(M_1, \cdots, M_N) = \sup\left\{ \frac{1}{N} \frac{\|\sum_{j=1}^N m_j\|^2}{\sum_{j=1}^N\|m_j\|^2} : m_j \in M_j\cap M^{\perp}, \|m_1\|^2 + \cdots + \|m_N\|^2 \neq 0 \right\} $$
is called the \emph{configuration constant} of $(M_1, \cdots, M_N)$.
\end{definition}

The configuration constant is related to the maximal possible norms of Gramian matrices. Recall that the \emph{Gramian matrix} of an $N$-tuple of vectors $(v_1, \cdots, v_N)$ is the $N\times N$ matrix
$$G(v_1, \cdots, v_N) = [\ps{v_i}{v_j}]_{1\le i,j\le N} .$$

\begin{proposition}\label{prop:Gram}
Let $N \ge 2$. Let $M_1, \cdots , M_N$ be $N$ closed subspaces of $H$ with intersection $M = M_1 \cap \cdots \cap M_N$. Then
$$ \kappa(M_1, \cdots, M_N) = \sup\left\{ \frac{1}{N} \left\| G(v_1, \cdots, v_N) \right\|: v_j \in M_j\cap M^{\perp}, \|v_j\| = 1, j= 1, \cdots, N \right\} .$$
\end{proposition}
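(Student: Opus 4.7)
The plan is to identify both suprema in the statement with the top eigenvalue of the Gramian, viewed as a Hermitian positive semidefinite matrix on $\C^N$. For any unit vectors $v_1, \dots, v_N \in H$ and any $c = (c_1, \dots, c_N) \in \C^N$, a direct calculation yields
\begin{equation*}
\Bigl\|\sum_{j=1}^N c_j v_j\Bigr\|^2 \;=\; \sum_{j,k=1}^N c_j \overline{c_k}\,\ps{v_j}{v_k},
\end{equation*}
which is a Hermitian positive semidefinite quadratic form on $\C^N$ whose associated matrix coincides with $G(v_1, \dots, v_N)$ (up to trivial conjugation conventions that do not affect Hermiticity). By the standard variational characterization of the largest eigenvalue of such a matrix,
\begin{equation*}
\|G(v_1, \dots, v_N)\| \;=\; \sup_{c \in \C^N \setminus \{0\}} \frac{\bigl\|\sum_j c_j v_j\bigr\|^2}{\sum_j |c_j|^2}.
\end{equation*}

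With this identity in hand, both inequalities are immediate. For the inequality $\tfrac{1}{N}\sup_v \|G\| \le \kappa$, fix unit vectors $v_j \in M_j \cap M^{\perp}$ and a nonzero $c \in \C^N$, and set $m_j := c_j v_j$; then $m_j \in M_j \cap M^{\perp}$ with $\|m_j\| = |c_j|$, and the ratio above equals $\|\sum_j m_j\|^2 / \sum_j \|m_j\|^2$, which is at most $N\,\kappa(M_1, \dots, M_N)$ by definition of $\kappa$. Taking the sup first over $c$ and then over admissible $v_j$'s yields this direction.

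For the reverse inequality $\kappa \le \tfrac{1}{N}\sup_v \|G\|$, given any $m_j \in M_j \cap M^{\perp}$ with $\sum_j \|m_j\|^2 \ne 0$, define $c_j := \|m_j\|$ and $v_j := m_j / \|m_j\|$ whenever $m_j \ne 0$; when $m_j = 0$, pick an arbitrary unit vector $v_j \in M_j \cap M^{\perp}$ and set $c_j := 0$. Then $m_j = c_j v_j$ for every $j$, and the key identity again gives
\begin{equation*}
\frac{\|\sum_j m_j\|^2}{\sum_j \|m_j\|^2} \;\le\; \|G(v_1, \dots, v_N)\|.
\end{equation*}
Taking the sup over admissible tuples $(m_1, \dots, m_N)$ completes the argument.

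The only real (and minor) obstacle is the degenerate case in which some $M_j \cap M^{\perp} = \{0\}$, equivalently $M_j = M$: then no unit vector $v_j$ is available in that factor. This is handled by observing that the corresponding $m_j$ must vanish in every admissible tuple on the left-hand side, so both suprema reduce in an obvious way to the statement with fewer subspaces, and the equality persists.
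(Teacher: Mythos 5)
Your proof is correct and follows essentially the same route as the paper's: both identify the ratio $\|\sum_j m_j\|^2/\sum_j\|m_j\|^2$ with a Rayleigh quotient of the Hermitian positive semidefinite Gramian matrix and invoke the variational characterization of its norm. Your treatment is in fact slightly more careful than the paper's, since you allow arbitrary complex coefficients $c_j$ (the paper only writes the quotient at the nonnegative vector of norms) and you explicitly handle the degenerate cases $m_j=0$ and $M_j\cap M^{\perp}=\{0\}$.
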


\begin{proof}
Let $m_j \in M_j\cap M^{\perp}$, $j= 1, \cdots, N$, with $\|m_1\|^2 + \cdots + \|m_N\|^2 \neq 0$. Set $v_j = \frac{m_j}{\|m_j\|}$ if $\|m_j\| \neq 0$, or $v_j = 0$ if $m_j = 0$. Denote ${\mathbf x} = (\|m_1\|, \cdots , \|m_N\|) \in \C^N\setminus \{0\}$. We have $\ps{{\mathbf x}}{{\mathbf x}} = {\mathbf x}^t{\mathbf x} \neq 0$ and
 \begin{eqnarray*}
    \frac{1}{N} \frac{\|\sum_{j=1}^N m_j\|^2}{\sum_{j=1}^N\|m_j\|^2} &=& \frac{1}{N} \frac{\ps{\sum_i^N \|m_i\|v_i}{\sum_j^N \|m_j\|v_j}}{\sum_{j=1}^N\|m_j\|^2} \\
    &=& \frac{1}{N} \frac{{\mathbf x}^tG(v_1, \cdots, v_N){\mathbf x}}{{\mathbf x}^t{\mathbf x}} .
 \end{eqnarray*}
The conclusion follows by taking the supremum and noting that the Gramian matrix $G(v_1, \cdots, v_N)$ is a Hermitian matrix.
\end{proof}

Consider the product Hilbert space $H^N$ which is the Hilbertian direct sum of $N$ copies of $H$, with scalar product
$$ \ps{\mathbf x}{\mathbf y} = \ps{(x_1,\cdots, x_N)}{\mathbf (y_1,\cdots, y_N)} := \sum_{j=1}^N \ps{x_j}{y_j} .$$
We denote by $\bC$ the Cartesian product $\bC = M_1 \times \cdots \times M_N \subset H^N$, and by $\bD$ the diagonal subset $\bD = \mbox{ diag }(H) = \{ (y,\dots, y) : y \in H\} \subset H^N$. Recall that $M = M_1 \cap \cdots \cap M_N$.

\begin{lemma}\label{lemma:CandD}
The projections onto $\bC$, $\bD$ and $\bC \cap \bD$ are given by
$$ P_{\bC}(x_1,\cdots, x_N) = (P_1x_1,\cdots, P_Nx_N) ,$$
$$ P_{\bD}(x_1,\cdots, x_N) = ((x_1 + \cdots + x_N)/N, \cdots , (x_1 + \cdots + x_N)/N) ,$$
and, respectively, by
$$ P_{\bC \cap \bD}(x_1,\cdots, x_N) = ((P_Mx_1 + \cdots + P_Mx_N)/N, \cdots , (P_Mx_1 + \cdots + P_Mx_N)/N) ,$$
where $(x_1,\cdots, x_N) \in H^N$.
\end{lemma}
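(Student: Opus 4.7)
The plan is to verify each of the three formulas using the characterization of the orthogonal projection by orthogonality of the residual, working coordinate-wise in $H^N$.

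First, for $P_\bC$, I would show directly that $(P_1 x_1, \ldots, P_N x_N) \in \bC$ and that the residual $(x_1 - P_1 x_1, \ldots, x_N - P_N x_N)$ is orthogonal to every element $(m_1,\ldots,m_N) \in \bC$: the inner product in $H^N$ splits as a sum $\sum_{i=1}^N \ps{x_i - P_i x_i}{m_i}$, and each term vanishes because $x_i - P_i x_i \in M_i^\perp$ while $m_i \in M_i$. For $P_\bD$, set $\bar{x} = (x_1+\cdots+x_N)/N$ and check that $(\bar{x},\ldots,\bar{x}) \in \bD$, while the residual $(x_1-\bar{x},\ldots,x_N-\bar{x})$ satisfies
\[
\ps{(x_1-\bar{x},\ldots,x_N-\bar{x})}{(z,\ldots,z)} = \Bigl\langle \sum_{i=1}^N x_i - N\bar{x},\, z\Bigr\rangle = 0
\]
for every $z \in H$, hence it is orthogonal to all of $\bD$.

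For $P_{\bC \cap \bD}$, the preliminary step is to identify $\bC \cap \bD$ explicitly. A tuple $(y,\ldots,y)$ lies in $\bC$ if and only if $y \in M_j$ for every $j$, i.e.\ $y \in M$; hence $\bC \cap \bD = \{(y,\ldots,y) : y \in M\}$. Since this is a closed subspace of $\bD$, the projection factors as $P_{\bC \cap \bD} = P_{\bC \cap \bD} \, P_\bD$. Apply $P_\bD$ first to obtain $(\bar{x},\ldots,\bar{x})$, and then observe that projecting a constant tuple $(\bar{x},\ldots,\bar{x})$ onto $\{(y,\ldots,y) : y\in M\}$ reduces, via the obvious isometry $y\mapsto \tfrac{1}{\sqrt N}(y,\ldots,y)$ (or by minimizing $N\|\bar{x}-y\|^2$ over $y \in M$), to $y = P_M \bar{x}$. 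Using linearity of $P_M$,
\[
P_M \bar{x} = P_M\!\left(\frac{x_1+\cdots+x_N}{N}\right) = \frac{P_M x_1 + \cdots + P_M x_N}{N},
\]
which yields the claimed expression.

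There is no real obstacle: the computation is a standard exercise in orthogonality. The only point requiring mild care is the identification $\bC \cap \bD = \mathrm{diag}(M)$ together with the two-step projection argument used for $P_{\bC \cap \bD}$, which avoids verifying the orthogonality relation directly on $\bC \cap \bD$.
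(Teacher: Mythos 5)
Your proof is correct. The two arguments differ in emphasis rather than substance, but the differences are real enough to note. For the formulas for $P_{\bC}$ and $P_{\bD}$ the paper simply cites Pierra, whereas you verify them directly via the orthogonality of the residual; your version is self-contained, and both verifications are exactly the computations one would expect. For $P_{\bC\cap\bD}$ both arguments begin by identifying $\bC\cap\bD=\operatorname{diag}(M)$, but then diverge: the paper minimizes $\sum_{j=1}^N\|x_j-m\|^2$ over $m\in M$ directly, using the Pythagorean split
$\sum_j\|x_j-m\|^2=\sum_j\|x_j-P_Mx_j\|^2+\sum_j\|P_Mx_j-m\|^2$ and a stationarity (``gradient'') argument to locate the minimizer at $m=N^{-1}\sum_jP_Mx_j$, while you exploit the nesting $\bC\cap\bD\subset\bD$ to factor $P_{\bC\cap\bD}=P_{\bC\cap\bD}P_{\bD}$, reduce to projecting a constant tuple onto $\operatorname{diag}(M)$ via the unitary $y\mapsto N^{-1/2}(y,\dots,y)$, and finish by linearity of $P_M$. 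Your factorization route avoids the slightly informal gradient step and reuses the formula for $P_{\bD}$ already established; the paper's route is a single direct minimization that does not require knowing $P_{\bD}$ first. Both are complete and correct.
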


\begin{proof}
The formulae for $P_{\bC}$ and $P_{\bD}$ were proved in
\cite{pierra}. For the third one, we note first that
$$\bC \cap \bD = \mbox{ diag }(M) = \{(m,\cdots ,m) : m \in M\} .$$
Therefore
$$ \|{\mathbf x} - P_{\bC \cap \bD}{\mathbf x}\|^2 = \dist({\mathbf x},\mbox{ diag }(M))^2 = \inf \{ \sum_{j=1}^N \|x_j - m\|^2 : m \in M\} .$$
We obtain
$$ \|{\mathbf x} - P_{\bC \cap \bD}{\mathbf x}\|^2 = \sum_{j=1}^N \|x_j - P_Mx_j\|^2 + \inf \left\{ \sum_{j=1}^N \|P_Mx_j - m\|^2 : m \in M\right\} .$$
The infimum is realized when the gradient is zero, $\sum_{j=1}^N (m-P_Mx_j) = 0$, that is when $m = N^{-1}\sum_{j=1}^N P_Mx_j$.
\end{proof}

\begin{proposition}\label{prop:FRconf}
Let $N \ge 2$. Let $M_1, \cdots, M_N$ be $N$ closed subspaces of $H$ with intersection $M = M_1\cap \cdots \cap M_N$. Then
\begin{itemize}
  \item[(a)] $c_0(M_1, \cdots, M_N) = 1$ if $M \neq \{0\}$, while $c_0(M_1, \cdots, M_N) = 0$ if and only if the subspaces $(M_1, \cdots, M_N)$ are pairwise orthogonal;
  \item[(b)] $c(M_1, \cdots, M_N) = c(M_1\cap M^{\perp}, \cdots, M_N\cap M^{\perp})= c_0(M_1\cap M^{\perp}, \cdots, M_N\cap M^{\perp})$, and thus $c(M_1, \cdots, M_N) = c_0(M_1, \cdots, M_N)$ if $M = \{0\}$;
  \item[(c)] $0 \le c_0(M_1, \cdots, M_N) \le 1$ and $0 \le c(M_1, \cdots, M_N) \le 1$;
  \item[(d)] $\frac{1}{N} \le \kappa_0(M_1, \cdots, M_N) \le 1$ and $\frac{1}{N} \le \kappa(M_1, \cdots, M_N) \le 1$;
  \item[(e)] $\kappa_0(M_1, \cdots, M_N) = c_0(\bC,\bD)^2$ and $\kappa(M_1, \cdots, M_N) = c(\bC,\bD)^2$;
  \item[(f)] $c(M_1, \cdots, M_N) = \frac{N}{N-1}\kappa(M_1, \cdots, M_N) - \frac{1}{N-1} = \frac{N}{N-1}c(\bC,\bD)^2 - \frac{1}{N-1}$ and similar statements hold for $c_0(M_1, \cdots, M_N)$.
\end{itemize}
\end{proposition}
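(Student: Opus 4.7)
The plan is to establish the six assertions in order, each by elementary manipulations of the definitions together with suitable choices of test vectors; no result beyond Lemma~\ref{lemma:CandD} and Lemma~\ref{lemma:FrAsRe} will be needed.

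For (a), the upper bound $c_0(M_1,\dots,M_N)\le 1$ will follow from $2\re\ps{m_j}{m_k}\le \|m_j\|^2+\|m_k\|^2$ summed over the $\binom{N}{2}$ pairs, noting that each $\|m_j\|^2$ then appears exactly $N-1$ times; the value $1$ is attained by taking $m_1=\cdots=m_N=m$ for any non-zero $m\in M$. For the equivalence $c_0=0\Leftrightarrow$ pairwise orthogonality, the forward implication is immediate from the definition; for the converse I would, assuming some pair $(M_i,M_j)$ is not orthogonal, pick $x_i\in M_i$ and $x_j\in M_j$ with $\ps{x_i}{x_j}\ne 0$, multiply $x_i$ by a phase to make $\re\ps{x_i}{x_j}>0$, and test with all other coordinates zero to obtain a strictly positive ratio.

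Part (b) is essentially a bookkeeping observation: if $N_j:=M_j\cap M^{\perp}$, then $\bigcap_j N_j=M\cap M^{\perp}=\{0\}$, so $N_j\cap\bigl(\bigcap_k N_k\bigr)^{\perp}=N_j$ and the three suprema range over identical sets of tuples. Part (c) combines the upper bound from (a) with the trivial lower bound (set all but one $m_j$ equal to zero). For (d), the inequality $\kappa_0,\kappa\le 1$ is Cauchy--Schwarz, $\|\sum_j m_j\|^2\le N\sum_j\|m_j\|^2$, while the lower bound $\ge 1/N$ is again obtained with a single non-zero summand.

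The main computation is (e). I plan to write a unit vector $\mathbf{y}\in\bD$ as $(y,\dots,y)$ with $\|y\|=1/\sqrt N$ and use
\[
\ps{(m_1,\dots,m_N)}{(y,\dots,y)}=\bigl\langle\textstyle\sum_j m_j,\,y\bigr\rangle
\]
to see that the sup of $|\ps{\mathbf{x}}{\mathbf{y}}|$ over unit $\mathbf{y}\in\bD$ equals $\|\sum_j m_j\|/\sqrt N$; squaring and dividing by $\|\mathbf{x}\|^2=\sum_j\|m_j\|^2$ identifies $c_0(\bC,\bD)^2$ with $\kappa_0$. For $c(\bC,\bD)^2=\kappa$, the delicate point (which I expect to be the main obstacle) is the description of the feasible sets defining the Friedrichs angle. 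Since $\bC\cap\bD=\operatorname{diag}(M)$, a direct calculation yields $\bD\cap(\bC\cap\bD)^{\perp}=\{(y,\dots,y): y\in M^{\perp}\}$ and $\bC\cap(\bC\cap\bD)^{\perp}=\{(m_1,\dots,m_N):m_j\in M_j,\ P_M\sum_j m_j=0\}$. The latter is strictly larger than $\prod_j(M_j\cap M^{\perp})$; however, replacing each $m_j$ by $\tilde m_j:=m_j-P_M m_j\in M_j\cap M^{\perp}$ (legitimate because $M\subseteq M_j$) preserves $\sum_j m_j$ under the constraint $P_M\sum_j m_j=0$, while decreasing the denominator, $\sum_j\|\tilde m_j\|^2\le\sum_j\|m_j\|^2$. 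Hence the supremum is attained on tuples with each $m_j\in M_j\cap M^{\perp}$, and the previous computation applies verbatim.

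Finally, (f) is a one-line algebraic manipulation: expand $\|\sum_j m_j\|^2=\sum_j\|m_j\|^2+2\sum_{j<k}\re\ps{m_j}{m_k}$, divide by $N\sum_j\|m_j\|^2$, take the sup over $m_j\in M_j\cap M^{\perp}$ and rearrange to obtain $\kappa=\tfrac{1}{N}+\tfrac{N-1}{N}c$, equivalent to the first equality of (f); the second equality then follows by substituting (e). The analogous statement for $c_0$ and $\kappa_0$ is identical, with the constraint $m_j\in M_j\cap M^{\perp}$ removed throughout.
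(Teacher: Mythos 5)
Your proof is correct, and its core --- the identification $\kappa=c(\bC,\bD)^2$ in (e) by pairing $(m_1,\dots,m_N)$ against diagonal vectors, followed by the expansion of $\|\sum_j m_j\|^2=\sum_j\|m_j\|^2+2\sum_{j<k}\re\ps{m_j}{m_k}$ to get (f) --- is the same computation the paper performs. Two differences are worth recording. First, the ordering: the paper proves (e), (f), (d) and then deduces (c) from (f) and (d), dismissing (a) and (b) as easy, whereas you prove (a)--(d) directly from the definitions via the pairwise inequality $2\re\ps{m_j}{m_k}\le\|m_j\|^2+\|m_k\|^2$ summed over the $\binom{N}{2}$ pairs; both routes work, and yours makes (a) and (c) self-contained. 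Second, and more substantively, in (e) you explicitly determine the feasible set $\bC\cap(\bC\cap\bD)^{\perp}$ as the set of tuples with $m_j\in M_j$ and $\sum_j m_j\in M^{\perp}$, note that it is in general strictly larger than $\prod_j(M_j\cap M^{\perp})$, and reduce to the latter via $m_j\mapsto m_j-P_Mm_j$, which fixes $\sum_j m_j$ and can only shrink the denominator. The paper's computation of $c(\bC,\bD)^2$ simply writes the supremum over $m_j\in M_j\cap M^{\perp}$ and arbitrary $y\in H$ without comment, implicitly using the standard fact that the defining constraints of a Friedrichs angle may be relaxed; your version closes that small gap explicitly, which is the one place where your argument is genuinely more careful than the paper's.
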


\begin{proof}
We start by giving the proof of part (e). We have
\begin{eqnarray*} c(\bC,\bD)^2 &=& \sup \left\{ \frac{|\ps{(m_1,\dots, m_N)}{(y,\dots,y)}_{H^N}|^2}{N(\|m_1\|^2 + \cdots + \|m_N\|^2)\|y\|^2} : \right.\\
& & \left. y \in H, y\neq 0, m_j \in M_j\cap M^{\perp}, \|m_1\|^2 + \cdots + \|m_N\|^2 \neq 0\right\}\\
&=& \sup \left\{ \frac{\ps{\sum_{j=1}^Nm_j}{y}^2}{N(\|m_1\|^2 + \cdots + \|m_N\|^2)\|y\|^2} : \right.\\
& &  \left. y \in H, y\neq 0, m_j \in M_j\cap M^{\perp}, \|m_1\|^2 + \cdots + \|m_N\|^2 \neq 0\right\} \\
&=& \sup \left\{ \frac{\|\sum_{j=1}^Nm_j\|^2}{N(\|m_1\|^2 + \cdots + \|m_N\|^2)}: \right.\\
& & \left. y \in H, y\neq 0, m_j \in M_j\cap M^{\perp}, \|m_1\|^2 + \cdots + \|m_N\|^2 \neq 0\right\}\\
&=& \kappa(M_1,\dots, M_N).
\end{eqnarray*}
The proof of the equality $\kappa_0(M_1, \cdots, M_N) = c_0(\bC,\bD)^2$ is similar.

We prove now that $c(M_1, \cdots, M_N) = \frac{N}{N-1}\kappa(M_1, \cdots, M_N) - \frac{1}{N-1}$. Indeed, we have
\begin{eqnarray*}
 c(M_1, \cdots, M_N) &=& \sup\left\{\frac{2}{N-1}\frac{\sum_{j<k}\re \ps{m_j}{m_k}}{\|m_1\|^2 + \cdots + \|m_N\|^2} : \right.\\
 & & \left. m_j \in M_j\cap M^{\perp}, \|m_1\|^2 + \cdots + \|m_N\|^2 \neq 0 \right\} \\
  &=& \sup \left\{\frac{1}{N-1} \frac{\|\sum_{j}m_j\|^2 - \sum_j \|m_j\|^2}{\|m_1\|^2 + \cdots + \|m_N\|^2} : \right.\\
  & & \left. m_j \in M_j\cap M^{\perp}, \|m_1\|^2 + \cdots + \|m_N\|^2 \neq 0 \right\} \\
 &=& \frac{N}{N-1}\kappa(M_1, \cdots, M_N) - \frac{1}{N-1}.
\end{eqnarray*}
The proof of the equality for $c_0(M_1, \cdots, M_N)$ is similar.

The upper bound $\kappa_0(M_1, \cdots, M_N) \le 1$ in (d) follows from the Cauchy-Schwarz inequality:
\begin{eqnarray*}
  \|m_1+\dots +m_N\|^2 &\le& (\|m_1\| + \dots + \|m_N\|)^2 \\
   &\le& N(\|m_1\|^2 + \dots + \|m_N\|^2) .
\end{eqnarray*}
For the lower bound $\kappa_0(M_1, \cdots, M_N) \ge 1/N$, notice that we have, for $m_1 \in M_1\setminus \{0\}$,
$$ \kappa_0(M_1, \cdots, M_N) \ge \frac{1}{N} \frac{\|\|m_1\|^2}{\|m_1\|^2} = \frac{1}{N}.$$
The inequalities for $\kappa(M_1, \cdots, M_N)$ follow from
$$\kappa(M_1, \cdots, M_N) = \kappa_0(M_1\cap M^{\perp}, \cdots, M_N\cap M^{\perp}) .$$

Now (c) is a consequence of (f) and (d), while (b) and (a) are easy to prove. For the first equality in (b) notice that $\cap_{j=1}^N (M_j\cap M^{\perp}) = \{0\}$.
\end{proof}

\begin{proposition}\label{prop:GammaProj}
Let $N \ge 2$. Let $M_1, \cdots , M_N$ be $N$ closed subspaces of $H$ with intersection $M = M_1 \cap \cdots \cap M_N$. Then
$$ \kappa(M_1, \cdots, M_N) = \left\|\frac{P_1 + \cdots + P_N}{N} - P_M\right\|$$
and
$$ c(M_1, \cdots, M_N) = \frac{N}{N-1}\left\|\frac{P_1 + \cdots + P_N}{N} - P_M\right\| - \frac{1}{N-1}.$$
\end{proposition}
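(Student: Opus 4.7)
The plan is to prove the first identity $\|S - P_M\| = \kappa(M_1,\dots,M_N)$, where $S = (P_1+\cdots+P_N)/N$, and then deduce the second identity from part (f) of Proposition \ref{prop:FRconf}, which already gives $c(M_1,\dots,M_N) = \frac{N}{N-1}\kappa(M_1,\dots,M_N) - \frac{1}{N-1}$.

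For the first identity, I would start with the observation that since $M \subseteq M_j$ one has $P_jP_M = P_MP_j = P_M$, and therefore $(P_j-P_M)^2 = P_j - P_M$. As this operator is also self-adjoint, it is the orthogonal projection onto its range, which is readily checked to be $M_j\cap M^\perp$. Writing $N_j := M_j \cap M^\perp$, we get
\[
S - P_M \,=\, \frac{1}{N}\sum_{j=1}^N P_{N_j},
\]
a positive self-adjoint operator.

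The key step is to realize $N(S-P_M)$ as $T^{*}T$ for a suitable operator $T$. Define $T\colon H \to H^N$ by $Tx = (P_{N_1}x,\dots,P_{N_N}x)$. A direct calculation gives $T^{*}(m_1,\dots,m_N) = \sum_{j=1}^N P_{N_j} m_j$, whence $T^{*}T = \sum_{j=1}^N P_{N_j} = N(S-P_M)$. Consequently
\[
N\|S-P_M\| \,=\, \|T^{*}T\| \,=\, \|T^{*}\|^2.
\]
On the other hand, $\|T^{*}\|^2 = \sup\{\|\sum_j P_{N_j} m_j\|^2 / \sum_j\|m_j\|^2\}$ over nonzero $\vec m \in H^N$. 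Since replacing $\vec m$ by the projected tuple $(P_{N_1}m_1,\dots,P_{N_N}m_N)$ leaves the numerator unchanged and only decreases the denominator, the supremum is attained on $\bN := N_1\times\cdots\times N_N$; thus
\[
\|T^{*}\|^2 \,=\, \sup\left\{\frac{\|\sum_j m_j\|^2}{\sum_j\|m_j\|^2} : m_j \in N_j,\ \textstyle\sum_j\|m_j\|^2 \neq 0 \right\} \,=\, N\,\kappa(M_1,\dots,M_N),
\]
by the very definition of $\kappa$. Combining the two expressions yields $\|S-P_M\| = \kappa(M_1,\dots,M_N)$, and substituting into Proposition \ref{prop:FRconf}(f) gives the formula for $c(M_1,\dots,M_N)$.

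I do not expect a real obstacle. The only point requiring care is the reduction of the supremum defining $\|T^{*}\|^2$ to tuples in $\bN$; this is just the remark that the coordinate-wise projection onto $\bN$ does not change the image under $T^{*}$ but can only decrease the norm of the argument. (As an alternative route, one could invoke Proposition \ref{prop:FRconf}(e) together with the Kayalar--Weinert identity for two subspaces applied to $(\bC,\bD)$, and then compute $P_{\bD}P_{\bC}P_{\bD} - P_{\bC\cap\bD}$ explicitly via Lemma \ref{lemma:CandD} to recognize it as $(S-P_M)$ on the diagonal; but the $T^{*}T$ argument above is more self-contained.)
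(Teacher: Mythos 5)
Your proof is correct, and it takes a genuinely different route from the paper's. The paper proves the identity by passing through the pair $(\bC,\bD)$ in $H^N$: it invokes the Kayalar--Weinert formula $c(\bC,\bD)=\|P_{\bD}P_{\bC}-P_{\bC\cap\bD}\|$ together with Proposition \ref{prop:FRconf}(e), writes $P_{\bD}P_{\bC}-P_{\bC\cap\bD}$ as an explicit $N\times N$ operator matrix via Lemma \ref{lemma:CandD}, squares it to obtain a circulant matrix with all entries equal to $\Sigma=\sum_j(P_j-P_M)^2$, diagonalizes that circulant by the discrete Fourier transform to get $\kappa=\frac1N\|\Sigma\|$, and finally simplifies $\Sigma=(I-P_M)\sum_jP_j$. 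You short-circuit all of this: after observing that $P_j-P_M$ is the orthogonal projection onto $N_j=M_j\cap M^\perp$, you factor $N(S-P_M)=T^{*}T$ with $Tx=(P_{N_1}x,\dots,P_{N_N}x)$ and read off $\|T^{*}\|^2=N\kappa$ directly from the definition of $\kappa$ (your reduction of the supremum to tuples in $N_1\times\cdots\times N_N$ is the one point needing care, and you handle it correctly). Both arguments ultimately rest on the $C^{*}$-identity $\|A^{*}A\|=\|A\|^2$, but yours is shorter and more self-contained, avoiding the external Kayalar--Weinert reference, Lemma \ref{lemma:CandD}, and the circulant/Fourier computation; the paper's version has the advantage of making the link with $c(\bC,\bD)$ and the product-space formalism explicit, which it reuses elsewhere (e.g.\ in Proposition \ref{prop:CandL}). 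The degenerate case where all $M_j=M$ (so the supremum defining $\kappa$ runs over an empty set) is left implicit in your argument, but the paper's statement has the same convention issue, so this is not a gap attributable to you.
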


\begin{proof}
We have (see for instance \cite{kayalar/weinert})
$$ c(\bC,\bD) = \left\| P_{\bD}P_{\bC} - P_{\bC\cap\bD}\right\| .$$
Using Lemma \ref{lemma:CandD}, $P_{\bD}P_{\bC} - P_{\bC\cap\bD}$ can be written as
$$ \frac{1}{N}\left[
     \begin{array}{cccc}
       1 & 1 &\cdots & 1 \\
       1 & 1 & \cdots & 1\\
       \vdots & \vdots &  & \vdots\\
       1 & 1 & \cdots & 1 \\
     \end{array}
   \right] \left[
     \begin{array}{cccc}
       P_1 & 0 & \cdots & 0 \\
       & \ddots &   &\\
       &  & \ddots  &\\
       0 & 0 & \cdots & P_N
     \end{array}
   \right] - \frac{1}{N}\left[
     \begin{array}{cccc}
       P_M & P_M &\cdots & P_M \\
       P_M & \cdots & \cdots & P_M\\
       \vdots &  &  &  \vdots \\
       P_M & P_M &\cdots & P_M
     \end{array}
   \right] .
$$
Therefore
$$P_{\bD}P_{\bC} - P_{\bC\cap\bD} = \frac{1}{N}\left[
     \begin{array}{cccc}
       P_1-P_M & P_2-P_M &\cdots & P_N-P_M \\
       P_1-P_M & P_2-P_M &\cdots & P_N-P_M \\
       \vdots &  & & \vdots \\
      P_1-P_M & P_2-P_M &\cdots & P_N-P_M
     \end{array}
   \right],$$
and so
$$
\kappa := \kappa(M_1, \dots , M_N) = c(\bC,\bD)^2 = \frac{1}{N^2}\left\|\left[
     \begin{array}{cccc}
       P_1-P_M & P_2-P_M &\cdots & P_N-P_M \\
       P_1-P_M & P_2-P_M &\cdots & P_N-P_M \\
       \vdots &  & & \vdots \\
      P_1-P_M & P_2-P_M &\cdots & P_N-P_M
     \end{array}
   \right]\right\|^2 .$$
We obtain that $N^2\kappa$ is equal to
$$\left\|\left[\begin{array}{cccc}
       P_1-P_M & P_2-P_M &\cdots & P_N-P_M \\
       P_1-P_M & P_2-P_M &\cdots & P_N-P_M \\
       \vdots &  & & \vdots \\
      P_1-P_M & P_2-P_M &\cdots & P_N-P_M
     \end{array}
   \right]\left[\begin{array}{cccc}
       P_1-P_M & P_1-P_M &\cdots & P_1-P_M \\
       P_2-P_M & P_2-P_M &\cdots & P_2-P_M \\
       \vdots &  & & \vdots \\
      P_N-P_M & P_N-P_M &\cdots & P_N-P_M
     \end{array}
   \right]\right\| .$$
 Therefore
$$N^2\kappa = \left\|\left[\begin{array}{cccc}
       \Sigma & \Sigma &\cdots & \Sigma \\
       \Sigma & \Sigma &\cdots & \Sigma \\
       \vdots &  & & \vdots \\
      \Sigma & \Sigma &\cdots & \Sigma
     \end{array}
   \right]\right\|, \quad \mbox{ where } \Sigma := \sum_{j=1}^N (P_j-P_M)^2 .$$
Since
$$(P_j-P_M)^2 = P_j -P_jP_M -P_MP_j + P_M = (I-P_M)P_j, $$
we have
$$ \Sigma = (I-P_M)\sum_{j=1}^N P_j .$$
Let $K$ be the matrix having all entries equal to $\Sigma$. One way to compute the norm of $K$ is to note that, like every circulant matrix, $K$ is unitarily equivalent to a diagonal matrix. Indeed, denote by $F$ the $N\times N$ unitary matrix representing the discrete Fourier transform $F = N^{-1/2}[(\omega^{jk})]_{0\le j,k \le N-1}$, where $\omega = \exp(-2i\pi/N)$ is a primitive $N$th root of unity. Then
$$ F^{\ast}KF = \left[\begin{array}{cccc}
       N\Sigma & 0 &\cdots & 0 \\
       0 & 0 &\cdots & 0 \\
       \vdots &  & & \vdots \\
      0 & 0 &\cdots & 0
     \end{array} \right] . $$
Therefore
$$ \kappa = \frac{1}{N}\|\Sigma\| = \|(I-P_M)\sum_{j=1}^N P_j\| = \|\sum_{j=1}^N P_j(I-P_M)\|.$$
Since $P_jP_M = P_M$, this can be written as
$$ \kappa = \left\|\frac{\sum_{j=1}^N P_j}{N} - P_M\right\|.$$
The proof is complete.
\end{proof}

The following definition is related to the minimum gap between two subspaces (see \cite[p. 219 and Lemma 4.4]{kato}). See also the regularity (or boundedly linearly regularity) condition from \cite{bauschke/borwein}, and the references therein.
\begin{definition}
Let $N \ge 2$. Let $M_1, \cdots, M_N$ be $N$ closed subspaces of $H$ with intersection $M = M_1\cap \cdots \cap M_N$. The number
$$ \ell(M_1, \dots , M_N) = \inf_{x\notin M} \frac{\max_{1\le j\le N}\dist (x, M_j)}{\dist(x,M)}$$
is called the \emph{inclination} of $(M_1, \cdots, M_N)$.
\end{definition}

\begin{proposition}\label{prop:CandL}
Let $N \ge 2$. Let $M_1, \cdots, M_N$ be $N$ closed subspaces of $H$ with intersection $M = M_1\cap \cdots \cap M_N$. Then
$$ 1-\ell(M_1, \dots , M_N) \le c(\bC,\bD) =  \kappa(M_1, \dots , M_N)^{1/2}\le 1 - \frac{\ell(M_1, \dots , M_N)^2}{2N}$$
and
$$ 1 - \frac{2N}{N-1}\ell(M_1, \dots , M_N) \le c(M_1, \dots , M_N) \le 1 - \frac{\ell(M_1, \dots , M_N)^2}{N-1}.$$
In particular, $\ell(M_1, \dots , M_N) = 0$ if and only if $c(M_1, \dots , M_N) = 1$, if and only if $\kappa(M_1, \dots , M_N) = 1$.
\end{proposition}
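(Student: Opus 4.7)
The plan is to reduce both pairs of inequalities to a single sandwich estimate for $c(\bC,\bD)$, exploiting the identity $\kappa(M_1,\dots,M_N) = c(\bC,\bD)^2$ of Proposition \ref{prop:FRconf}(e) and the affine relation $c(M_1,\dots,M_N) = \frac{N c(\bC,\bD)^2 - 1}{N-1}$ of Proposition \ref{prop:FRconf}(f). Concretely, I will establish
\[
\frac{\ell^2}{N} \le 1 - c(\bC,\bD)^2 \le \ell^2,
\]
where $\ell := \ell(M_1,\dots,M_N)$.

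The key analytic input is the classical minimum-gap identity
\[
1 - c(A,B)^2 = \inf\Bigl\{\frac{\dist(\mathbf{y},A)^2}{\dist(\mathbf{y},A\cap B)^2} : \mathbf{y}\in B\setminus(A\cap B)\Bigr\},
\]
valid for any pair of closed subspaces of a Hilbert space (see \cite[p.~219]{kato} and \cite{deutsch:survey_angle}). If a self-contained argument is preferred, writing $\mathbf{y} = P_{A\cap B}\mathbf{y} + \mathbf{y}_1$ with $\mathbf{y}_1 \in B\cap(A\cap B)^\perp$ gives, using orthogonality, the identities $\dist(\mathbf{y},A\cap B) = \|\mathbf{y}_1\|$ and $\dist(\mathbf{y},A)^2 = \|\mathbf{y}_1\|^2 - \|P_A\mathbf{y}_1\|^2$; the Friedrichs angle is then recovered as $c(A,B) = \sup \|P_A\mathbf{y}_1\|/\|\mathbf{y}_1\|$ over nonzero $\mathbf{y}_1 \in B\cap(A\cap B)^\perp$.

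I will then apply this identity with $A = \bC$ and $B = \bD$. For $\mathbf{y} = (y,\dots,y) \in \bD$ with $y \notin M$, the formulas of Lemma \ref{lemma:CandD} yield $\dist(\mathbf{y},\bC)^2 = \sum_{j=1}^N \dist(y,M_j)^2$ and $\dist(\mathbf{y},\bC\cap\bD)^2 = N\dist(y,M)^2$. Since every element of $\bD$ has the form $(y,\dots,y)$, with $y\notin M$ equivalent to $\mathbf{y}\notin\bC\cap\bD$, the elementary squeeze $\max_j t_j^2 \le \sum_j t_j^2 \le N\max_j t_j^2$ applied to $t_j = \dist(y,M_j)$, followed by taking the infimum over $y\notin M$, produces the desired sandwich for $1 - c(\bC,\bD)^2$.

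Taking square roots and using $\sqrt{1-t^2}\ge 1-t$ together with $\sqrt{1-s}\le 1-s/2$ (valid for $s,t\in[0,1]$) yields the first pair of inequalities. The second pair follows by plugging $c(\bC,\bD)^2 \ge (1-\ell)^2 \ge 1-2\ell$ and $c(\bC,\bD)^2 \le 1-\ell^2/N$ into the affine relation of Proposition \ref{prop:FRconf}(f). The final assertion is immediate: the sandwich forces $\ell = 0$ to be equivalent to $c(\bC,\bD) = 1$, which is in turn equivalent to $\kappa = 1$ and to $c(M_1,\dots,M_N) = 1$ by Proposition \ref{prop:FRconf}(f). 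The only delicate point is the minimum-gap identity, which may merit the brief inline verification sketched above rather than a bare citation; everything else reduces to the distance formulas of Lemma \ref{lemma:CandD} and elementary inequality chasing.
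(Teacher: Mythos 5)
Your proof is correct, but it takes a genuinely different route from the paper's. The paper establishes both bounds on $c(\bC,\bD)$ by direct construction: for the lower bound it picks a near-optimal $x$ for the inclination, forms the explicit vectors $\mathbf{c}=\frac{1}{\sqrt N}(u_1,\dots,u_N)$ and $\mathbf{d}=\frac{1}{\sqrt N}((I-P_M)x,\dots,(I-P_M)x)$, verifies $\mathbf{c}\in\bC\cap\mbox{ diag}(M)^{\perp}$, and estimates $\re\ps{\mathbf{c}}{\mathbf{d}}$ term by term via $2\re\ps{x-P_Mx}{u_j}=\|x-P_Mx\|^2+\|u_j\|^2-\|x-P_Mx-u_j\|^2$; for the upper bound it takes near-optimal unit vectors realizing $c(\bC,\bD)$, rotates by $e^{i\theta}$, and bounds $\sum_j\|m_j-y\|^2$. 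You instead reduce everything to the single exact identity $1-c(\bC,\bD)^2=\inf_{y\notin M}\bigl(\sum_{j}\dist(y,M_j)^2\bigr)/\bigl(N\dist(y,M)^2\bigr)$, obtained from the minimum-gap formula together with the distance computations from Lemma \ref{lemma:CandD}, after which both bounds drop out of $\max_j t_j^2\le\sum_j t_j^2\le N\max_j t_j^2$. Your inline verification of the minimum-gap identity is sound: the two key points — that $\dist(\mathbf{y},A)=\dist(\mathbf{y}_1,A)$ because $P_{A\cap B}\mathbf{y}\in A$, and that $\|P_A\mathbf{y}_1\|=\sup\{|\ps{a}{\mathbf{y}_1}| : a\in A\cap(A\cap B)^{\perp},\ \|a\|\le 1\}$ because $\mathbf{y}_1\perp A\cap B$ — both check out, so the argument is self-contained and you are right that the bare citation alone would be the only weak spot. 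What your route buys: it is shorter, it yields an exact variational formula for $1-c(\bC,\bD)^2$ rather than merely a two-sided estimate, and the constants come out marginally sharper (your $c(\bC,\bD)^2\le 1-\ell^2/N$ gives the stated bound $c(M_1,\dots,M_N)\le 1-\ell^2/(N-1)$ in one line, where the paper needs an extra simplification $1-\frac{\ell^2}{N-1}(1-\frac{\ell^2}{4N})\le 1-\frac{\ell^2}{N-1}$). What the paper's route buys is independence from the minimum-gap identity and explicit test vectors that are reused elsewhere (e.g.\ in the proof of the equivalence $(10)\Leftrightarrow(3)$ of Theorem \ref{thm:qualASC}). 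Your remaining bookkeeping — $\sqrt{1-t^2}\ge 1-t$, $\sqrt{1-s}\le 1-s/2$, $1-\ell^2\ge 1-2\ell$, the affine relation of Proposition \ref{prop:FRconf}(f), and the final chain of equivalences — is all correct; note only that, exactly as in the paper's own proof, one implicitly assumes $M\ne H$ so that the infimum defining $\ell$ runs over a nonempty set.
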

\begin{proof}
Denote $\ell = \ell(M_1, \dots , M_N)$. Let $\ep > 0$. There exists $x \in H$ with $\|x-P_Mx\| = \dist(x,M) = 1$ such that $\dist(x,M_j) < \ell+\ep$ for each $j$. Set $u_j = P_j(x-P_Mx) = P_jx-P_Mx$, where $P_j$ is the orthogonal projection onto $M_j$ ($1\le j \le N)$. Then $u_j \in M_j$ and $\|u_j\| \le \|x-P_Mx\| = 1$. Recall that $\bC$ is the $\ell^2$-direct sum $\bC = M_1 \times \cdots \times M_N \subset H^N$, while $\bD$ is the diagonal $\bD = \mbox{ diag }(H) = \{ (y,\dots, y) : y \in H\} \subset H^N$. We have $\bC \cap \bD = \mbox{ diag }(M)$, and so ${\mathbf y} = (y_1, \cdots, y_N) \in \mbox{ diag }(M)^{\perp}$ if and only if $\ps{y_1+\cdots+y_N}{m} = 0$ for every $m\in M$. Thus $ (y_1, \cdots, y_N) \in \mbox{ diag }(M)^{\perp}$ if and only if $y_1+\cdots+y_N \in M^{\perp}$.

Consider
$$\mathbf{d} = (\frac{1}{\sqrt{N}} (I-P_M)x, \dots , \frac{1}{\sqrt{N}} (I-P_M)x) \in \bD \cap \mbox{ diag }(M)^{\perp}$$
 and
 $$\mathbf{c} = (\frac{1}{\sqrt{N}} u_1, \dots , \frac{1}{\sqrt{N}} u_N) \in \bC .$$
For each $m \in M$ we have
\begin{eqnarray*}
  \ps{\sum_{j=1}^N u_j}{m} &=&  \frac{1}{\sqrt{N}}\left[ \sum_{j=1}^N\ps{P_jx}{m} - N\ps{P_Mx}{m}\right]\\
   &=& \frac{1}{\sqrt{N}}\left[ \sum_{j=1}^N\ps{x}{P_jm} - N\ps{x}{m}\right] \\
   &=& 0 .
\end{eqnarray*}
Therefore $\mathbf{c} \in \bC \cap \mbox{ diag }(M)^{\perp}$.
We also have $\|\mathbf{d}\| = 1$ and $\|\mathbf{c}\|^2 = \frac{1}{N}(\|u_1\|^2 + \cdots + \|u_N\|^2)\le  1$. Thus
$$c(\bC,\bD) \ge \left|\ps{\mathbf{c}}{\mathbf{d}}\right| = \frac{1}{N} \left|\ps{\sum_{j=1}^Nu_j}{x-P_Mx}\right| \ge  \frac{1}{N} \re\ps{\sum_{j=1}^Nu_j}{x-P_Mx}.$$
For a fixed $j$ we have $\|x-P_Mx -u_j\| = \|x-P_jx\| = \dist(x,M_j) < \ell+\ep$. Therefore
$$2\re\ps{x-P_Mx}{u_j} = \|x-P_Mx\|^2 + \|u_j\|^2 -  \|x-P_Mx-u_j\|^2 > 1+\|u_j\|^2 - (\ell+\ep)^2.$$
We also have $\|u_j\| \ge \|x-P_Mx\| - \|x-P_Mx-u_j\| > 1 - (\ell+\ep)$. We obtain
\begin{eqnarray*}
  c(\bC,\bD) &\ge& \frac{1}{N} \re\ps{\sum_{j=1}^Nu_j}{x-P_Mx} \\
   &\ge& \frac{1}{2N} \sum_{j=1}^N \left( 1 + \|u_j\|^2 - (\ell+\ep)^2\right) \\
   &\ge& \frac{1}{2N} \left(N+N(1 - (\ell+\ep))^2 -N(\ell+\ep)^2\right) \\
   &=& \frac{1}{2} \left(1+(1 - (\ell+\ep))^2 -(\ell+\ep)^2\right) .
\end{eqnarray*}
As this inequality is true for every $\ep > 0$ we get
$$c(\bC,\bD) \ge \frac{1-\ell^2+(1-\ell)^2}{2} = 1-\ell .$$

Denote $c = c(\bC,\bD)$. Let $\ep > 0$. There exist $\mathbf{c} \in \bC\cap \mbox{ diag }(M)^{\perp}$ and $\mathbf{d} \in \bD \cap \mbox{ diag }(M)^{\perp}$ with $\|\mathbf{c}\| = 1$, $\|\mathbf{d}\| = 1$ such that $c < |\ps{\mathbf{c}}{\mathbf{d}}| + \ep$. Let $\theta \in \R$ be such that $\ps{\mathbf{c}}{\mathbf{d}} = e^{i\theta}|\ps{\mathbf{c}}{\mathbf{d}}|$. Then
$$\|\mathbf{c} - e^{i\theta}\mathbf{d}\|^2 = 2 - 2\re ( e^{-i\theta}\ps{\mathbf{c}}{\mathbf{d}}) = 2-2|\ps{\mathbf{c}}{\mathbf{d}}| \le 2-2(c-\ep).$$
Set $\mathbf{c} = (m_1, \dots , m_N) \in \bC \cap \mbox{ diag }(M)^{\perp}$ and $e^{i\theta}\mathbf{d} = (y, \dots , y) \in \bD \cap \mbox{ diag }(M)^{\perp}$. Then $y\in M^{\perp}$ and
$$\|m_1\|^2 +  \cdots + \|m_N\|^2 = 1, \quad \|y\| = 1/\sqrt{N}, \quad  \mbox{and} \quad \sum_{j=1}^N \|m_j - y\|^2 \le 2-2(c-\ep).$$
Let $x = \sqrt{N}y$. Then $x \in M^{\perp}$, $\dist(x, M) = \|x\| = 1$ and we have
$$\dist(x,M_j)^2 \le \|x- \sqrt{N}m_j\|^2 = N\|m_j-y\|^2 \le N(2-2(c-\ep)).$$
We finally obtain $\ell^2 \le 2N(1-c)$, and so
$$1-\ell \le c(\bC,\bD) \le 1 - \frac{\ell^2}{2N}.$$

Using the equalities
$$c(\bC,\bD) =  \kappa(M_1, \dots , M_N)^{1/2} \quad \mbox{ and }\kappa(M_1, \dots , M_N) = \frac{N-1}{N}c(M_1, \cdots, M_N) + \frac{1}{N}$$
we obtain
$$ \frac{N(1-\ell)^2-1}{N-1} \le c(M_1, \cdots, M_N) \le \frac{N(1-(\ell^2/2N)^2)-1}{N-1}.$$
Since $2-\ell \le 2$, we can write
$$ \frac{N(1-\ell)^2-1}{N-1} = 1 - \frac{N}{N-1}\ell(2-\ell) \ge 1 - \frac{2N}{N-1}\ell .$$
We also have
$$ \frac{N(1-(\ell^2/2N)^2)-1}{N-1} = 1 - \frac{\ell^2}{N-1}(1 - \frac{\ell^2}{4N}) \le 1 - \frac{\ell^2}{N-1}.$$
\end{proof}

\section{Characterising (ASC) for products of projections}
\subsection{A qualitative result}
When $T$ is the product of $N$ orthogonal projections, we know from Theorem \ref{thm:appl_dic} that the dichotomy (QUC)/(ASC) holds, and that we have quick uniform convergence if and only if the range of $T-I$ is closed. The following qualitative result gives a characterization of the (ASC) condition in terms of several parameters associated to $(M_1, \cdots , M_N)$, or spectral properties of $T$, or random products. We denote by $\|\cdot\|_e$ the \emph{essential norm} and by $\sigma_e$ the \emph{essential spectrum}.
\begin{theorem}\label{thm:qualASC}
Let $N \ge 2$. Let $M_1, \cdots, M_N$ be $N$ closed subspaces of $H$ with intersection $M = M_1\cap \cdots \cap M_N$. Denote $P_j$ the orthogonal projection onto $M_j$, $1\le j \le N$, and by $P_M$ the orthogonal projection onto $M$. Let $T = P_NP_{N-1}\cdots P_1$. The following assertions are equivalent:
\begin{itemize}
  \item[(1)]  $\Ran(T-I)$ is not closed;
  \item[(1$^\prime$)] for every $k\ge N$ and every sequence of indices $(i_k)_{k\ge 1}$  such that $\{i_1, \ldots, i_k\} = \{1,2,\ldots, N\}$, $\Ran(P_{i_k}\cdots P_{i_1}-I)$ is not closed;
  \item[(2)] one of the conditions (ASC1), (ASC2), (ASC3), (ASCH) holds for $T$;
   \item[(2$^\prime$)] (ASCH) for random products: for every $\ep > 0$, every sequence $(a_n)_{n\ge 0}$ of positive reals with $\lim_{n\to\infty}a_n = 0$, and every sequence of indices $(i_k)_{k\ge 1}$ in $\{1,2,\ldots, N\}$, there exists $x\in H$ with $\|x\| < \sup_n a_n + \ep$ such that $$\re\ps{P_{i_n}P_{i_{n-1}}\cdots P_{i_1}x-P_Mx}{x}>a_n$$ for each $n\ge 1$;
  \item[(3)] $c(M_1, \cdots , M_N) = 1$. Equivalently, $\kappa(M_1, \cdots , M_N) = 1$, or $\ell(M_1, \cdots , M_N) = 0$;
  \item[(4)] for every $\ep > 0$, every closed subspace $K \subset M^{\perp}$ of finite codimension (in $M^{\perp}$), there exists $x \in K$ such that $\|x\| = 1$ and $\max\{\dist(x,M_j) : j=1, \cdots, N\} < \ep$;
  \item[(5)] $1 \in \sigma(T-P_M)$;
  \item[(5$^\prime$)] for every $k$ and every $i_1, \cdots, i_k \in \{1,2, \cdots,N\}$ we have $1 \in \sigma(P_{i_k}\cdots P_{i_1}-P_M)$;
  \item[(6)] $\|T - P_M\| = 1$ ;
    \item[(6$^\prime$)] for every $k$ and every sequence of indices $(i_k)_{k\ge 1}$, $1\le i_k\le N$, with $\{i_1, \ldots, i_k\} = \{1,2,\ldots, N\}$ we have $\|P_{i_k}\cdots P_{i_1} - P_M\| = 1$ ;
   \item[(7)] $\|T-P_M\|_e = 1$;
  \item[(7$^\prime$)] for every $k$ and every $i_1, \cdots, i_k \in \{1,2, \cdots,N\}$ we have $\|P_{i_k}\cdots P_{i_1}-P_M\|_e = 1$;
  \item[(8)] $1 \in \sigma_e(T-P_M)$;
  \item[(8$^\prime$)] for every $k$, every $i_1, \cdots, i_k \in \{1,2, \cdots,N\}$
  we have $1 \in \sigma_e(P_{i_k}\cdots P_{i_1}-P_M)$;
  \item[(9)] for every $\ep > 0$, every closed subspace $K \subset M^{\perp}$ of finite codimension (in $M^{\perp}$), there exists $x \in K$ such that $\|Tx-x\| \le \ep$;
  \item[(9$^\prime$)] for every $\ep > 0$, every closed subspace $K \subset M^{\perp}$ of finite codimension (in $M^{\perp}$), there exists $x \in K$ such that $\|P_{i_k}\cdots P_{i_1}x-x\| \le \ep$ for every $k$, every $i_1, \cdots, i_k \in \{1,2, \cdots,N\}$ ;
  \item[(10)] the sum of $\mbox{ diag}(M_1)\subset H^{N-1}$ and $M_2\oplus \cdots \oplus M_N \subset H^{N-1}$ is not closed in $H^{N-1}$ (and equivalent statements for $\mbox{ diag}(M_j)\subset H^{N-1}$, $2\le j\le N$);
  \item[(11)] $M_1^{\perp} + \cdots + M_N^{\perp}$ is not closed in $H$.
\end{itemize}
\end{theorem}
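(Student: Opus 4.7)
The plan is to reduce the statement to the subspace $M^\perp$ via the identity $TP_M=P_MT=P_M$, and then combine the dichotomy results of Section~2 with the geometric formulae of Section~3 (especially Proposition~\ref{prop:GammaProj}) to rotate between three families of equivalent conditions: (i) geometric conditions on $(M_1,\ldots,M_N)$, (ii) approximate common-unit-vector conditions in the $M_j$, and (iii) spectral/norm/range conditions on $T$. The primed statements are then handled by noting that the same arguments apply to every product $P_{i_k}\cdots P_{i_1}$ that covers $\{1,\ldots,N\}$.

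First, since $TP_M=P_MT=P_M$, the operator $T-P_M$ vanishes on $M$ and leaves $M^\perp$ invariant, where it coincides with $T|_{M^\perp}$, a contraction whose iterates converge strongly to $0$ by Theorem~\ref{thm:appl_dic}(i). Every spectral, essential-spectral, norm and range condition in the statement thereby reduces to a statement about $T|_{M^\perp}$ on $M^\perp$. The equivalence (1)$\Leftrightarrow$(2) is then an immediate instance of Theorems~\ref{thm:21}--\ref{thm:22}, and the equivalences in (3) --- namely (3)$\Leftrightarrow$($\kappa(M_1,\ldots,M_N)=1$)$\Leftrightarrow$($\ell(M_1,\ldots,M_N)=0$) --- follow from Propositions~\ref{prop:FRconf}(f) and~\ref{prop:CandL}.

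Next, the geometric-to-spectral bridge rests on Proposition~\ref{prop:GammaProj}, which gives $\kappa=\bigl\|\tfrac{1}{N}\sum_{j=1}^N P_j-P_M\bigr\|$. Hence (3) is equivalent to the existence of unit vectors $x_n\in M^\perp$ with $\bigl\|\tfrac{1}{N}\sum_{j=1}^N P_jx_n\bigr\|\to1$; since $\|P_jx_n\|\le1$, a parallelogram-law argument forces $\|P_jx_n\|\to1$ for each $j$ individually, so $\dist(x_n,M_j)=(1-\|P_jx_n\|^2)^{1/2}\to0$ for every $j$ --- this is (4) with $K=M^\perp$. A telescoping estimate
\[
\|Tx-x\|\le\sum_{j=1}^N\dist(P_{j-1}\cdots P_1x,M_j)\le\sum_{j=1}^N\sum_{k=1}^j\dist(x,M_k)
\]
then yields (9), and the same reasoning applied to every product $P_{i_k}\cdots P_{i_1}$ gives (9$'$). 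To promote (4), (9), (9$'$) to every finite-codimension $K\subset M^\perp$, I would observe that the self-adjoint operator $\tfrac{1}{N}\sum_{j=1}^N P_j-P_M$ cannot have eigenvalue $1$ (such an eigenvector would lie in $\bigcap_j M_j\cap M^\perp=\{0\}$), so its norm equals its essential norm and is preserved under the removal of any finite-dimensional subspace of $M^\perp$. This simultaneously produces (4), (9), (9$'$) and, through the corresponding approximate eigenvectors, the essential-spectrum conditions (7), (7$'$), (8), (8$'$); the conditions (5), (5$'$), (6), (6$'$) then follow from $\sigma_e\subseteq\sigma$ and $\|\cdot\|_e\le\|\cdot\|\le1$. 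Conversely, if (1) fails then by the Case~(2) argument in the proof of Theorem~\ref{thm:21} (together with Remark~\ref{rem:2}) $1$ lies in the approximate point spectrum of $T|_{M^\perp}$; approximate unit eigenvectors $x_n\in M^\perp$ satisfy $\|P_j\cdots P_1x_n\|\to1$ (since $T$ factors through contractions), which forces $\dist(x_n,M_j)\to0$ and thus $\kappa=1$, closing the loop. Finally, for (10) the Friedrichs cosine between $\mbox{diag}(M_1)$ and $M_2\oplus\cdots\oplus M_N$ in $H^{N-1}$ is computed from the Gramians and identified with a strictly monotone function of $c(M_1,\ldots,M_N)$ tending to $1$ exactly when $c(M_1,\ldots,M_N)=1$, after which Deutsch's $N=2$ closed-sum criterion concludes; for (11) one writes $M_1^\perp+\cdots+M_N^\perp=\Ran A$ with $A(y_1,\ldots,y_N)=\sum_j(I-P_j)y_j$, and reduces closedness of $\Ran A$ to closedness of $\Ran A^\ast$, which is a Friedrichs-angle statement in $H^N$ that Proposition~\ref{prop:FRconf}(e)--(f) identifies with $c(M_1,\ldots,M_N)<1$.

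The main obstacle will be the essential-spectrum refinement in the geometric-to-spectral bridge: one has to verify that the supremum defining $\kappa$ is never attained on a finite-dimensional subspace of $M^\perp$, so that (4) and hence (9), (9$'$) can be produced inside every prescribed finite-codimension subspace. This is precisely where the strong convergence $T^n\to P_M$ enters decisively, ruling out any genuine unit fixed vector of $\tfrac{1}{N}\sum_{j=1}^N P_j$ inside $M^\perp$. Once this refinement is in place, every remaining implication is a systematic translation between the three circles of equivalent conditions.
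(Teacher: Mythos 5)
Your architecture is close to the paper's (reduce to $M^\perp$, get (1)$\Leftrightarrow$(2) from Theorems~\ref{thm:21}--\ref{thm:22}, translate between geometric, approximate-common-vector and spectral conditions), and two of your ingredients are genuinely nice alternatives: deriving (4) from $\kappa=1$ via the positive self-adjoint average $\frac{1}{N}\sum_j P_j-P_M$ (whose norm $1$ cannot be an eigenvalue, hence lies in the essential spectrum, hence survives passage to any finite-codimension $K$) replaces the paper's Fredholm argument on the nonnormal operator $I-T|_{M^\perp}$; and your treatment of (11) via $A^*y=((I-P_1)y,\dots,(I-P_N)y)$, for which $\|A^*y\|^2=\sum_j\dist(y,M_j)^2$ and $\Ker A^*=M$, gives (11)$\Leftrightarrow\ell=0$ directly instead of citing Bauschke--Borwein--Lewis. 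However, there is a genuine logical gap: as written, your implication graph is $(1)\Leftrightarrow(2)$, $(1)\Rightarrow(3)\Leftrightarrow(4)\Leftrightarrow(10)\Leftrightarrow(11)$, and $(3)\Rightarrow(9),(9'),(7),(8),(5),(6)$ and their primes --- but nothing returns from $(5)$--$(9)$ to $(1)$ or $(3)$, so you have not proved that, say, $\|T-P_M\|=1$ forces non-closed range. The paper closes the cycle with $(9)\Rightarrow(4)$ and $(6)\Rightarrow(3)$, the latter via the Pythagorean telescoping of Lemma~\ref{lemma:4} yielding $\|T-P_M\|\le\sqrt{1-\ell^2/N^2}$; the same idea appears in your final ``conversely'' paragraph but is applied only to approximate fixed vectors of $T$, not to norming sequences of $T-P_M$ or to the vectors produced by (9). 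You need to state and prove at least one such return implication. (Also, that paragraph says ``if (1) fails'' where you mean ``if $\Ran(T-I)$ is not closed''; if (1) fails then $r(T|_{M^\perp})<1$ and $1$ is not in the spectrum at all.)

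Two further points. Condition (2$'$) is not covered by ``the same arguments apply to every product'': the random products $P_{i_n}\cdots P_{i_1}$ are not powers of a single operator, and the paper's proof of $(9')\Rightarrow(2')$ genuinely needs the Amemiya--Ando theorem on weak convergence of random products together with the weak-orbit construction of \cite{badea/muller:topol}; your proposal is silent on this. Finally, for (10) you assert that $c(\operatorname{diag}(M_1),M_2\oplus\cdots\oplus M_N)$ is an exact strictly monotone function of $c(M_1,\dots,M_N)$; this is more than is true or needed --- the paper only proves the endpoint equivalence $\ell(M_1,\dots,M_N)=0\Leftrightarrow c(\operatorname{diag}(M_1),M_2\oplus\cdots\oplus M_N)=1$ by an argument parallel to Proposition~\ref{prop:CandL}, and you should aim for that weaker statement.
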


The conditions $(1), (2), (5), (6), (7), (8)$ and $(9)$, most of them of spectral nature, are conditions about $T=P_N\cdots P_1$, while the corresponding conditions denoted with primes are analog conditions about random products $P_{i_N}\cdots P_{i_1}$. The conditions $(3), (4), (10)$ and $(11)$ are about the geometry of subspaces $M_j$.
\par\smallskip
Notice that we have the dichotomy (QUC)/(ASC) in
all possible senses, and that (QUC) holds if and only if $c(M_{1},
\ldots, M_{N})<1$. A quantitative estimate reflecting the
geometric convergence of $\|T^n-P_M\|$ to zero, in terms of the
Friedrichs number, will be given after the proof of the theorem.

\begin{proof}[Proof of Theorem \ref{thm:qualASC}]
''$(1) \Leftrightarrow (2)$`` \quad The equivalence of (1) and (2) follows from Theorem \ref{thm:appl_dic}.

''$(1) \Leftrightarrow (5)$`` \quad The equivalence of (1) and (5) follows from the proof of Theorem \ref{thm:21} (see also Remark \ref{rem:2}). Notice that, with respect to the decomposition $H = M\oplus M^{\perp}$, we have $T = P_M\oplus A$, where $A = T\mid_{M^{\perp}} = T(I-P_M) = T-P_M$.

''$(1) \Rightarrow (3)$`` \quad We prove this implication in a quantitative form. Denote $$\gamma = \gamma(I-T) =\inf\left\{ \|x-Tx\| : x\in H, \dist(x,\Ker(T-I) = 1\right\}$$
the \emph{reduced minimum modulus} of $T-I$. Then $\Ran(T-I)$ is closed if and only if $\gamma > 0$. Clearly $Ty = y$ for $y \in M$. If $Tx =x$, then
$$ \|x\| = \|P_N\cdots P_1x\| \le \|P_{N-1}\cdots P_1x\| \le \cdots \le \|P_1x\| \le \|x\|.$$
We successively obtain $P_1x = x$, $P_2x = x$, \dots , $P_Nx = x$, and finally $x \in M$. Thus $\Ker(T-I) = M$.

Let $\ep > 0$. There exists $x \in H$ with $\|x-P_Mx\| = \dist(x, M) = 1$ such that $\|x-Tx\| \le \gamma+\ep$. We obtain

\begin{eqnarray*}
1 = \|x-P_Mx\| &\ge & \|P_1(x-P_Mx)\| = \|P_1x-P_Mx\| \ge \|P_2P_1x-P_Mx\| \\
&\ge &\dots \ge \|P_N \cdots P_1x -P_Mx\| = \|Tx-P_Mx\| \\
&\ge & \|x-P_Mx\| - \|x-Tx\| \ge 1 - \gamma - \ep .
\end{eqnarray*}
We also have
\begin{eqnarray*}
\|(I-P_1)(x-P_Mx)\|^2 &=& \|x-P_Mx\|^2 - \|P_1x-P_Mx\|^2 \\
&\le& 1-(1-\gamma-\ep)^2 = -(\gamma+\ep)^2+2(\gamma+\ep) \le 2\gamma+2\ep .
\end{eqnarray*}
Thus $\dist(x,M_1) = \|x-P_1x\| = \|(I-P_1)(x-P_Mx)\|\le (2\gamma+2\ep)^{1/2}$.

Let $y = x - P_Mx$; then $\|y\| = 1$. For a fixed $s$ between $1$ and $N$ we can write
 \begin{eqnarray*}
 \|P_s \cdots P_1y - P_{s+1} \cdots P_1y\|^2 &=& \|P_{s} \cdots P_1y\|^2 - \|P_{s+1} \cdots P_1y\|^2\\
  &\le& \|y\|^2 - \|P_{s+1} \cdots P_1x - P_Mx\|^2 \\
  &\le& 1 - (1-\gamma-\ep)^2 \le 2\gamma+2\ep .
 \end{eqnarray*}
Thus
\begin{eqnarray*}
  \dist(x,M_j) &=& \dist(y,M_j) \le  \|y - P_{j} \cdots P_1y\| \\
   &\le& \|y - P_1y\| + \|P_1y - P_{2}P_1y\| + \dots + \|P_{j-1} \cdots P_1y - P_{j} \cdots P_1y\|\\
   &\le& j\sqrt{(2\gamma+2\ep)},
\end{eqnarray*}
for every $j$. Hence $\max_{1\le j \le N} \dist(x,M_j) \le N\sqrt{(2\gamma+2\ep)}$ and, as $\ep$ is arbitrary,
$$ \ell := \ell(M_1, \dots , M_N) \le N\sqrt{2\gamma} .$$
We obtain
$ \frac{1}{2N^2}\ell(M_1, \dots , M_N)^2 \le \gamma(T-I) .$ Therefore $\Ran(I-T)$ not closed ($\gamma = 0$) implies $\ell(M_1, \dots , M_N) = 0$.

''(3) $\Rightarrow$ (1$^\prime$)`` \quad Let $k \ge N$ and let $(i_k)_{k\ge 1}$ be a sequence of indices with $\{i_1, \ldots, i_k\} = \{1,2,\ldots, N\}$. This implies that $\Ker(I - P_{i_k}P_{i_{k-1}}\cdots P_{i_1}) = M$. Let $ \ell = \ell(M_1, \dots , M_N)$ and
let $\ep > 0$. There exists $x\in H$ with $\|x-P_Mx\| = \dist(x,M) = 1$ such that $\max_j \dist (x, M_j) < \ell+\ep$. We have
$$\|x-P_{i_1}x\| = \dist(x,M_{i_1}) < \ell+\ep$$
and
$$ \|P_{i_2}P_{i_1}x-P_{i_1}x\| = \dist(P_{i_1}x,M_{i_2}) \le \|x-P_{i_1}x\| + \dist(x,M_{i_2}) < 2(\ell+\ep) .$$
Set $x_0 = x$ and $x_s = P_{i_s}P_{i_{s-1}} \cdots P_{i_1}x$ for $s\ge 1$. Suppose that
\beq \label{eq:41}
 \|x_s - x_{s-1}\| \le 2^{s-1}(\ell+\ep)
\eeq
holds for $1\le s \le r$.
Then
\begin{eqnarray*}
  \|x_{r+1} - x_r\| &=& \dist(P_{i_r} \cdots P_{i_1}x,M_{i_{r+1}}) \\
   &\le& \|P_{i_r} \cdots P_{i_1}x - x\| + \dist(x,M_{i_{r+1}})\\
   &\le& \|x_s - x_{s-1}\|+\|x_{s-1} - x_{s-2}\|+ \cdots + \|x_1-x\|\\
   &+& \dist(x,M_{r+1}) \\
   &\le& (2^{r-1}+2^{r-2}+\cdots+2+1+1)(\ell+\ep) = 2^r(\ell+\ep).
\end{eqnarray*}
Therefore \eqref{eq:41} holds for every $s$, and we obtain
\begin{eqnarray*}
  \|P_{i_k}P_{i_{k-1}}\cdots P_{i_1}x - x\| &=& \|x_k - x\| \\
   &\le& \|x_k - x_{k-1}\| + \|x_{k-1} - x_{k-2}\| + \cdots +\|x_1 - x\| \\
   &\le & (2^{k-1}+2^{k-2}+\cdots+1)(\ell+\ep) =  (2^{k}-1)(\ell+\ep).
\end{eqnarray*}
Thus $\gamma(P_{i_k}P_{i_{k-1}}\cdots P_{i_1}-I) \le (2^{k}-1)(\ell+\ep)$. Making $\ep \to 0$ we obtain
$$ \gamma(P_{i_k}P_{i_{k-1}}\cdots P_{i_1}-I) \le (2^{k}-1)\ell .$$
This shows that if $\ell = 0$ or, equivalently, if $c(M_1,\cdots,M_N) = 1$,  then the range of $P_{i_k}P_{i_{k-1}}\cdots P_{i_1}-I$ is not closed.

The implication ''(1$^\prime$) $\Rightarrow (1)$`` is clear. Note also that the above proof for $k = N$ and $i_s = s$ implies that
\beq\label{eq:qua}
    \frac{1}{2N^2}\ell^2 \le \gamma(T-I) \le  (2^{N}-1)\ell .
\eeq
Here $\ell = \ell(M_1,\cdots,M_N)$.

''(1$^\prime$) $\Rightarrow$ (6$^\prime$)`` \quad  Note that $\|P_{i_k} \cdots P_{i_1}-P_M\| \le 1$ always. Suppose now that $a := \|P_{i_k} \cdots P_{i_1}-P_M\| < 1$. We want to show that the range of $I-P_{i_k} \cdots P_{i_1}$ is closed. Notice first that $\Ker(I-P_{i_k} \cdots P_{i_1}) = M$ since $\{i_1, \dots, i_k\} = \{1,2,\ldots, N\}$. Let $x \in H$ be such that $\dist(x,M) = \|x-P_Mx\| = 1$.
We have
\begin{eqnarray*}
  \|(I-P_{i_k} \cdots P_{i_1})x\| &=& \|x-P_Mx +P_Mx-P_{i_k} \cdots P_{i_1}x\|\\
   &\ge& 1 - \|P_{i_k} \cdots P_{i_1}x-P_Mx\| \\
   &=& 1 - \|(P_{i_k} \cdots P_{i_1}-P_M)(x-P_Mx)\| \\
   &\ge& 1- a.
\end{eqnarray*}
Therefore the reduced minimum modulus of $I-P_{i_k} \cdots P_{i_1})$ verifies $\gamma(I-P_{i_k} \cdots P_{i_1}) \ge 1- \|P_{i_k} \cdots P_{i_1}-P_M\|$. In particular, $\Ran(I-P_{i_k} \cdots P_{i_1})$ is closed if $a<1$.

The implication ''(6$^\prime$) $\Rightarrow (6)$`` is easy.

\begin{lemma}\label{lemma:4}
Let $x\in H$, and set $u_j = P_{j} \cdots P_1x-P_Mx$ for $j\ge 1$, $u_0 = x-P_Mx$. For every $j$ with $1 \le j\le N$ we have
$$  \|u_{j-1}-u_j\|^2 \le \|u_{j-1}\|^2 - \|Tx-P_Mx\|^2
   \le \|x-P_Mx\|^2 - \|Tx-P_Mx\|^2 .$$
\end{lemma}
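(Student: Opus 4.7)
The plan is to exploit the recursive structure $u_j = P_j u_{j-1}$ and then invoke the Pythagorean theorem for the orthogonal projections $P_j$.

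First I would establish the key recursion. Since $M \subset M_j$, we have $P_j P_M = P_M$ for every $j \in \{1,\dots,N\}$. Therefore
\[
u_j = P_j(P_{j-1}\cdots P_1 x) - P_M x = P_j(P_{j-1}\cdots P_1 x) - P_j P_M x = P_j u_{j-1}.
\]
By iterating this identity one also obtains, for every $0 \le j \le k \le N$, the formula $u_k = P_k P_{k-1}\cdots P_{j+1} u_j$; in particular $u_N = P_N \cdots P_{j+1} u_j$ and $u_{j-1} = P_{j-1}\cdots P_1 (x - P_M x)$.

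Next, since $P_j$ is an orthogonal projection, Pythagoras gives
\[
\|u_{j-1} - u_j\|^2 = \|u_{j-1} - P_j u_{j-1}\|^2 = \|u_{j-1}\|^2 - \|P_j u_{j-1}\|^2 = \|u_{j-1}\|^2 - \|u_j\|^2.
\]
Because each $P_i$ is a contraction, the identity $u_N = P_N \cdots P_{j+1} u_j$ yields $\|u_N\| \le \|u_j\|$, that is, $\|u_j\|^2 \ge \|Tx - P_M x\|^2$. Substituting this into the Pythagorean equality proves the first inequality:
\[
\|u_{j-1}-u_j\|^2 = \|u_{j-1}\|^2 - \|u_j\|^2 \le \|u_{j-1}\|^2 - \|Tx - P_M x\|^2.
\]

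Finally, the second inequality follows from the same contraction argument applied at the other end: from $u_{j-1} = P_{j-1}\cdots P_1(x - P_M x)$ we get $\|u_{j-1}\| \le \|x - P_M x\|$, hence $\|u_{j-1}\|^2 - \|Tx - P_M x\|^2 \le \|x - P_M x\|^2 - \|Tx - P_M x\|^2$. There is no real obstacle here; the only observation one must not overlook is $P_j P_M = P_M$, which is what allows the subtracted term $P_M x$ to be ``pulled inside'' $P_j$ and produce the clean recursion $u_j = P_j u_{j-1}$.
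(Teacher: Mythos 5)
Your proof is correct and is essentially the paper's own argument: both rest on the identity $u_j = P_j u_{j-1}$ (via $P_jP_M = P_M$), the Pythagorean relation $\|u_{j-1}-P_ju_{j-1}\|^2 + \|P_ju_{j-1}\|^2 = \|u_{j-1}\|^2$, and the contractivity of the projections to bound $\|u_j\|$ below by $\|u_N\| = \|Tx-P_Mx\|$ and $\|u_{j-1}\|$ above by $\|x-P_Mx\|$. The only difference is cosmetic: you isolate $\|u_{j-1}-u_j\|^2 = \|u_{j-1}\|^2 - \|u_j\|^2$ first, whereas the paper bounds the sum $\|u_{j-1}-u_j\|^2 + \|Tx-P_Mx\|^2$ directly.
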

\begin{proof}
We can write
$$\|Tx-P_Mx\| = \|u_N\| = \|P_Nu_{N-1}\| \le \|u_{N-1}\| \le \dots \le \|u_0\| = \|x-P_Mx\| .$$
Therefore
\begin{eqnarray*}
  \|u_{j-1}-u_j\|^2 + \|Tx-P_Mx\|^2 &=& \|u_{j-1}-P_ju_{j-1}\|^2 + \|P_N\cdots P_{j+1}P_ju_{j-1}\|^2 \\
   &\le& \|u_{j-1}-P_ju_{j-1}\|^2 + \|P_ju_{j-1}\|^2 \\
   &=& \|u_{j-1}\|^2 \\
   &=& \|P_{j-1}\cdots P_1(x-P_Mx)\|^2 \\
   &\le& \|x-P_Mx\|^2 ,
\end{eqnarray*}
completing the proof of the Lemma.
\end{proof}
We continue the proof of Theorem \ref{thm:qualASC}.

''$(6) \Rightarrow (3)$`` \quad Let $j$ between $1$ and $N$. Using the Cauchy-Schwarz inequality and Lemma \ref{lemma:4} we obtain
\begin{eqnarray*}
  \dist(x,M_j)^2 &\le& \|x - P_{j} \cdots P_1x\|^2 \\
   &\le& \left(\|x - P_1x\| + \|P_1x - P_2P_1x\| + \cdots + \|P_{j-1}\cdots P_1x -P_{j}\cdots P_1x\|\right)^2\\
   &\le& j\left( \|u_{0}-u_1\|^2 + \|u_{1}-u_2\|^2 + \cdots + \|u_{j-1}-u_j\|^2\right) \\
   &\le& j^2\left( \|x-P_Mx\|^2- \|Tx-P_Mx\|^2\right) \\
   &\le& N^2\left( \|x-P_Mx\|^2- \|Tx-P_Mx\|^2\right).
\end{eqnarray*}
We get
$$ N^2\left( \|x-P_Mx\|^2- \|Tx-P_Mx\|^2\right) \ge \max_{1\le j \le N}\dist(x,M_j)^2 \ge \ell^2\|x-P_Mx\|^2,$$
which yields
$$ \|Tx-P_Mx\|^2 \le \left( 1 - \frac{\ell^2}{N^2}\right)\|x-P_Mx\|^2 .$$
In particular
\beq\label{eq:norm}
\|T-P_M\| \le \sqrt{1 - \frac{\ell^2}{N^2}} .
\eeq
Therefore $\|T-P_M\| = 1$ implies $\ell = 0$, i.e., (6) implies (3).

''(1) $\Rightarrow$ (9)`` \quad  Let $\ep > 0$. Let $K \subset M^{\perp}$ be a closed subspace of finite codimension in $M^{\perp}$. With respect to the decomposition $H = M \oplus M^{\perp}$, the operator $T$ has the following matrix decomposition
$$
T = \left(
\begin{array}{cc}
I & 0 \\
0 & A \\
\end{array}
\right) .
$$
Since $\Ran(T-I)$ is not closed, the range of the operator $I-A$, acting on $M^{\perp}$, is not closed. This means that $I-A \in \mathcal{B}(M^{\perp})$ is not an upper semi-Fredholm operator, and therefore there exists $x \in K$ such that $\|x\| = 1$ and $\|x-Ax\| \le \ep$. It follows that $\|x-Tx\| \le \ep$.

''(9) $\Rightarrow$ (4)`` \quad Let $x$ be as in (9). Then $x\in K$, $\|x\| = 1$, and $\|x-Tx\| \le \ep$. We have
$$1=\|x\| \ge \|P_1x\| \ge \|P_2P_1x\| \ge \ldots \ge \|Tx\| \ge 1-\ep .$$
Set $x_s = P_sP_{s-1}\cdots P_1x$ for $s\ge 1$ and $x_0 = x$. Then $x_s \in M_s \cap M^{\perp}$ for each $s\ge 0$ and $x_{s-1}-x_s = (I-P_s)x_{s-1}$ is orthogonal to $x_s$. Hence
$$\|x_{s-1}-x_s\|^2 = \|x_{s-1}\|^2 - \|x_s\|^2 \le 1-(1-\ep)^2 < 2\ep $$
and $\|x_{s-1} - x_s\| \le \sqrt{2\ep}$, for each $s$.
We obtain
$$ \dist(x, M_1) = \|x-P_1x\| = \|x_{0} - x_1\| \le \sqrt{2\ep}.$$
For $s\ge 1$ we have $ \dist(x, M_s) \le \|x-P_sP_{s-1}\cdots P_1x\|$; hence
$$ \dist(x, M_s) \le \|x-P_1x\|+ \|P_1x-P_2P_1x\| + \cdots + \|P_{s-1}\cdots P_1x-P_sP_{s-1}\cdots P_1x\| \le s\sqrt{2\ep}.$$
Therefore
$\max\{\dist(x,M_j) : j=1, \cdots, N\} \le N\sqrt{2\ep}.$
As $\ep>0$ is arbitrary, the proof of this implication is over.

''(4) $\Rightarrow$ (9$^\prime$)`` \quad Suppose that (4) holds. Let $\ep > 0$ and let $K \subset M^{\perp}$ be a closed subspace of finite codimension in $M^{\perp}$. Then there exists $x \in K$ such that $\dist(x,M) = \|x\| = 1$ and $\max\{\dist(x,M_j) : j=1, \cdots, N\} \le \ep$. Let $i_1, \cdots, i_k \in \{1,2, \ldots, N\}$. Set $x_0 = x$, $x_s = P_{i_s} \cdots P_{i_1}x$ for $s\ge 1$. Then $x_0 \in K$ and $x_s\in M^{\perp}\cap M_{i_s}$ for $s\ge 1$.

We shall prove by induction the following two claims :
\begin{equation}\label{eq*}
\tag{*} \dist(x_s,M_j) \le 2^s\ep \quad (j\ge 1)
\end{equation}
and
\begin{equation}\label{eq**}
\tag{**} \|x_s - x_0\| \le (2^s-1)\ep \quad (s\ge 1).
\end{equation}
Both claims are clearly true for $s=0$. Suppose that both inequalities are true for some $s\ge 0$. Then, using several times the induction hypothesis, we have
\begin{eqnarray*}
  \|x_{s+1} - x_0\| &\le& \|x_{s+1} - x_s\| + \|x_{s} - x_0\| \\
   &=& \dist(x_s,M_{i_{s+1}})+ \|x_{s} - x_0\|\\
   &\le& 2^s\ep + (2^{s}-1)\ep = (2^{s+1}-1)\ep .
\end{eqnarray*}
For $j\ge 1$ we can write
$$ \dist(x_{s+1}, M_j) \le \|x_{s+1}-x_0\|+\dist(x_0,M_j) \le (2^{s+1}-1)\ep + \ep = 2^{s+1}\ep .$$
Thus both \eqref{eq*} and \eqref{eq**} are true ; in particular we have
$$ \|P_{i_k}\cdots P_{i_1}x-x\| = \|x_k - x_0\| \le 2^k\ep .$$
As $\ep > 0$ was arbitrary, we obtain (9$^\prime$).

''(9$^\prime$) $\Rightarrow$ (8$^\prime$)`` \quad We have $P_sP_{s-1}\cdots P_1-P_M = P_sP_{s-1}\cdots P_1(I-P_M)$, so the range of this operator is in $M^{\perp}$. The assertion (9$^\prime$) implies that $1$ belongs to the essential spectrum of the restriction of $P_sP_{s-1}\cdots P_1-P_M$ to $M^{\perp}$. Therefore $1\in \sigma_e(P_sP_{s-1}\cdots P_1-P_M)$.

The implication ''(8$^\prime$) $\Rightarrow$ (8)`` is clear.

''(8) $\Rightarrow$ (7) $\Rightarrow$ (6)`` \quad The statement (8) implies the following sequence of inequalities for the essential spectral radius  $r_e(T-P_M)$ and the essential norm of $T-P_M$:
$$1 \le r_e(T-P_M) \le \|T-P_M\|_e \le \|T-P_M\| = \|P_N\cdots P_1(I-P_M)\| \le 1 .$$
Thus all inequalities are equalities.

The proofs of implications ''(8$^\prime$) $\Rightarrow$ (7$^\prime$) $\Rightarrow$ (6$^\prime$)`` are similar. The implications ''(8$^\prime$) $\Rightarrow$ (5$^\prime$) $\Rightarrow$ (5)`` are clear.

''(9$^\prime$) $\Rightarrow$ (2$^\prime$)`` \quad Let $A_k$ be the operator $P_{i_k}P_{i_{k-1}}\cdots P_{i_1}-P_M$ restricted to $M^{\perp}$. The condition (9$^\prime$) implies that $1$ is in the boundary of the essential spectrum of the operator $A_k$. According to \cite{amemiya/ando}, on the space $M^{\perp}$ the operators $A_k$ converge weakly to $0$. The assertion (2$^\prime$) can be proved exactly as in \cite[Theorem 1]{badea/muller:topol} by replacing there $T^n$ by $A_n$.

The implication ''(2$^\prime$) $\Rightarrow$ (2)`` is clear.

''(10) $\Leftrightarrow$ (3)`` \quad We have $\ell(M_1, \cdots, M_N) = 0$ if and only if $c(\mbox{ diag}(M_1),M_2\oplus \cdots \oplus M_N) = 1$.
The proof of this assertion is analogous to that of the first part of Proposition \ref{prop:CandL}. Therefore $\ell(M_1, \cdots, M_N) = 0$, or equivalently $c(M_1, \cdots, M_N) = 1$, if and only if $\mbox{ diag}(M_1) + M_2\oplus \cdots \oplus M_N$ is not closed in $H^{N-1}$.

The implication ''(11) $\Leftrightarrow$ (3)`` follows from \cite{bauschke/borwein/lewis}. The proof is complete.
\end{proof}

\subsection{Quantitative statements} Some remarks concerning the proof of Theorem \ref{thm:qualASC} are in order.

\begin{remark}The proof of Theorem \ref{thm:qualASC} gives some quantitative information between the parameters involved. Some other estimates can be proved in a similar way. For instance, we present here the quantitative version of the implication ''(6$^\prime$) $\Rightarrow$ (3)``. Let $k \ge 1$. Suppose $i_1, \dots, i_k \in \{1,2,\ldots, N\}$ and $\{i_1, \dots, i_k\} = \{1,2,\ldots, N\}$. Denote $\ell = \ell(M_1, \dots , M_N)$ and
$$a := \|P_{i_k} \cdots P_{i_1}-P_M\| \le 1 .$$
Let $\ep > 0$. There exists $x\in H$ with $\|x\| = 1$ such that $\|P_{i_k} \cdots P_{i_1}x-P_Mx\| > a-\ep$. Denote $y = x-P_Mx$ and
$$x_s = P_{i_s} \cdots P_{i_1}x-P_Mx, \quad x_0 = x-P_Mx = y  \quad (s \ge 1).$$
Clearly
$$ 1 = \|x\| = \sqrt{\|y\|^2+\|P_Mx\|^2} \ge \|y\| = \|x_0\| \ge \|x_1\| \ge \dots \ge \|x_k\| > a - \ep .$$
Since $x_{s-1} - x_s = (I-P_{i_s})x_{s-1}$ is orthogonal to $M_{i_s}$, and $x_s \in M_{i_s}$, we have
$$ \| x_{s-1} - x_s\|^2 = \| x_{s-1}\|^2 - \|x_s\|^2 \le \|y\|^2 - (a-\ep)^2 .$$
For each $r \in \{1, \dots , k\}$ we have
\begin{eqnarray*}
  \|x - P_Mx - x_r\| &\le& \|x_0 - x_1\| + \|x_1 - x_2\| + \cdots + \|x_{r-1} - x_r\|\\
   &\le& k\sqrt{\|y\|^2 - (a-\ep)^2} .
\end{eqnarray*}
Since $\{i_1, \dots, i_k\} = \{1,2,\ldots, N\}$, for each $j \in \{1, 2, \ldots, N\}$ we have $\{x_1, \dots, x_k\} \cap M_j \neq \emptyset$. Therefore
$$ \dist(x,M_j) \le \max \{\|x-P_Mx-x_r\| : 1\le r \le k\} \le k\sqrt{\|y\|^2 - (a-\ep)^2}$$
and
$$ \|y\|^2\ell^2 \le \max \{ \dist(x,M_j)^2 : 1\le j \le k\} \le k^2\left(\|y\|^2 - (a-\ep)^2\right) .$$
Hence $k^2(a-\ep)^2 \le (k^2-\ell^2)\|y\|^2 \le (k^2-\ell^2)$.
As this is satisfied for every $\ep > 0$, we obtain $ \|P_{i_k} \cdots P_{i_1}-P_M\| = a \le (1 - \ell^2/k^2)^{1/2}. $
\end{remark}

\begin{theorem}\label{cor:main}
Let $N \ge 2$. Let $H$ be a complex Hilbert space. Let $M_1, \cdots, M_N$ be $N$ closed subspaces of $H$ with intersection $M = M_1\cap M_2 \cdots \cap M_N$. Let $P_j= P_{M_j}$, $1\le j \le N$, and $P_M$ be the corresponding orthogonal projections. Denote $T = P_N\cdots P_1$.

(i) \quad Suppose that $c := c(M_1, \cdots, M_N) < 1$. Then $(T^n)_{n\ge 1}$ is uniformly convergent to $P_M$, with
$$ \|T^n - P_M\| \le \left(1-\left(\frac{1-c}{4N}\right)^2\right)^{n/2} \quad (n \ge 1).$$

(ii) \quad Suppose that $c := c(M_1, \cdots, M_N) = 1$. Then $(T^n)_{n\ge 1}$ is strongly convergent to $P_M$ and we have (ASC), in all possible meanings of this paper.
\end{theorem}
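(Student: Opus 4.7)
The plan is to deduce both statements from ingredients already developed: the norm estimate \eqref{eq:norm} derived inside the proof of Theorem~\ref{thm:qualASC}, the two-sided bound from Proposition~\ref{prop:CandL}, and (for (ii)) the various equivalences listed in Theorem~\ref{thm:qualASC} itself.

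For part (i), the first step is to reduce the iterated estimate $\|T^n - P_M\|$ to the single-step estimate $\|T - P_M\|$. Since $M \subset M_j$ for every $j$, we have $P_jP_M = P_M$, hence $TP_M = P_M$; taking adjoints gives $P_M T^* = P_M$, so $M^\perp$ is invariant under $T$. Writing $T = I_M \oplus A$ with $A := T|_{M^\perp}$ along the decomposition $H = M \oplus M^\perp$, one has $T - P_M = 0 \oplus A$, and therefore $T^n - P_M = (T-P_M)^n = 0 \oplus A^n$, which yields
$$\|T^n - P_M\| \le \|T - P_M\|^n \qquad (n\ge 1).$$

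Next, I would invoke the inequality \eqref{eq:norm} already established during the proof of the implication ``(6) $\Rightarrow$ (3)'' in Theorem~\ref{thm:qualASC}, namely
$$\|T - P_M\| \le \sqrt{1 - \frac{\ell^2}{N^2}},$$
where $\ell = \ell(M_1,\dots,M_N)$. Proposition~\ref{prop:CandL} then gives $\ell \ge \tfrac{(N-1)(1-c)}{2N}$, and since $\tfrac{N-1}{2N} \ge \tfrac14$ for every $N\ge 2$, this implies $\ell \ge (1-c)/4$. Combining the three bounds produces
$$\|T^n - P_M\| \le \left(1 - \frac{\ell^2}{N^2}\right)^{n/2} \le \left(1 - \left(\frac{1-c}{4N}\right)^2\right)^{n/2},$$
which is exactly the asserted estimate.

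For part (ii), strong convergence $T^n x \to P_M x$ for every $x\in H$ is Halperin's theorem \eqref{eq:12} (alternatively, Theorem~\ref{thm:appl_dic}(i) applied in the uniformly convex Hilbert space $H$). The (ASC) conclusions in all the meanings introduced in the paper are then immediate from Theorem~\ref{thm:qualASC}: the assumption $c(M_1,\dots,M_N)=1$ is condition~(3) of that theorem, and its equivalence with (2) and (2$^\prime$) supplies (ASC1), (ASC2), (ASC3), (ASCH) and the random-product version (2$^\prime$) of (ASCH) at once.

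The entire argument is a collation of previously established estimates and equivalences, so no serious obstacle is expected; the only mildly quantitative point is verifying the elementary inequality $\min_{N\ge 2}\tfrac{N-1}{2N} = \tfrac14$, which is what produces the factor $1/(4N)$ in the geometric rate.
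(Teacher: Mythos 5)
Your argument is correct and reaches the paper's bound with essentially the same quantitative ingredients --- the one-step estimate \eqref{eq:norm} and the lower bound for the inclination from Proposition~\ref{prop:CandL} --- but you organize the reduction to the one-step norm differently. The paper invokes the identity $T^n-P_M=(Q_NQ_{N-1}\cdots Q_1)^n$ with $Q_j=P_{M_j\cap M^{\perp}}$ (citing Lemma 9.30 of Deutsch's book), passes to the reduced subspaces $\widetilde{M_j}=M_j\cap M^{\perp}$, and transfers the Friedrichs number via Proposition~\ref{prop:FRconf}(b); you instead observe that $M$ reduces $T$, so that $T=I_M\oplus A$ and $T^n-P_M=(T-P_M)^n$, which gives $\|T^n-P_M\|\le\|T-P_M\|^n$ and lets you apply \eqref{eq:norm} and Proposition~\ref{prop:CandL} directly to the original subspaces. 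This is a bit more self-contained (no external citation, no passage to the reduced tuple) and yields the identical numerical bound. One small slip in the justification of the reducing decomposition: from $TP_M=P_M$, taking adjoints gives $P_MT^{*}=P_M$, which expresses the invariance of $M^{\perp}$ under $T^{*}$ (equivalently, of $M$ under $T$) --- not the invariance of $M^{\perp}$ under $T$ that you need. The correct identity is $P_MT=P_M$, which follows from $P_MP_j=P_M$ for each $j$ (the adjoint of $P_jP_M=P_M$), or equivalently by taking adjoints of $T^{*}P_M=P_1\cdots P_NP_M=P_M$; with this repaired, $T=I_M\oplus A$ and the whole proof stands. Your treatment of part (ii) --- Halperin's theorem for strong convergence and the equivalence of condition (3) with (2) and (2$^\prime$) in Theorem~\ref{thm:qualASC} for all forms of (ASC) --- is exactly what the paper leaves implicit when it says only the estimate in (i) needs proof.
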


\begin{proof} We have to prove only the estimate in Part $(i)$. Suppose that $c := c(M_1, \cdots, M_N) < 1$.
Denote $\widetilde{M_j} = M_j\cap M^{\perp}$ and $Q_j = P_{\widetilde{M_j}}$ for $1\le j \le N$. Then the intersection of $\widetilde{M_j}$,  $1\le j \le N$, is $\{0\}$ and, according to Proposition \ref{prop:FRconf}, (b), we have $c(\widetilde{M_1},\ldots, \widetilde{M_N}) = c < 1$. We also have $T^n - P_M = (Q_NQ_{N-1}\cdots Q_1)^n$ for each $n\ge 1$ (see \cite[Lemma 9.30]{deutsch:book}). We apply (\ref{eq:norm}), which was proved in the implication ''(6) $\Rightarrow$ (3)`` of Theorem \ref{thm:qualASC}, to $Q_j$. We obtain
$ \|Q_NQ_{N-1}\cdots Q_1\| \le \sqrt{1 - \frac{\ell^2}{N^2}} ,$
where now $\ell = \ell(\widetilde{M_1},\ldots, \widetilde{M_N})$. According to Proposition \ref{prop:CandL}, applied for the subspaces $\widetilde{M_j}$, we have $1-\frac{2N}{N-1}\ell \le c$. This implies
$\ell^2 \ge \left(\frac{N-1}{2N}\right)^2(1-c)^2 \ge \frac{1}{16}(1-c)^2.$
Therefore
\begin{eqnarray*}\|T^n - P_M\|^2 &=& \|(Q_NQ_{N-1}\cdots Q_1)^n\|^2 \le \|(Q_NQ_{N-1}\cdots Q_1)\|^{2n}\\
 &\le& \left(1 - \frac{\ell^2}{N^2}\right)^n \le \left(1-\left(\frac{(1-c)^2}{16N^2}\right)\right)^{n} ,
\end{eqnarray*}
which implies $(i)$.
\end{proof}

\subsection{Comparison with other estimates}
Let $M_{1},\ldots, M_{N}$ be $N$ closed subspaces of $H$,
with intersection $M = M_1\cap \cdots \cap M_N$. Denote $c_{ij} =
c_0(M_{i}\cap M^{\perp},M_{j}\cap M^{\perp})$ for $1 \le i,j \le
N$.

It was proved in \cite[Theorem 2.1]{DeHu} that
$$\|(P_{N}\ldots P_{1})^{n}-P_{M}\|\leq c_{1N}^{n-1}c_{12}^{n}\ldots
c_{N-1,N}^{n}.$$ In particular, we have quick uniform convergence
whenever one of the cosine $c_{i,i+1}$ of the Dixmier angles is
strictly less than one.

Moreover, for any sequence $i_{1},\ldots i_{N}$ of integers such
that $\{i_{1},\ldots, i_{N}\}=\{1,\ldots, N\}$, Theorem
\ref{thm:qualASC} shows that we have (QUC) for $(P_{N}\ldots
P_{1})^{n}$ if and only if we have (QUC) for $(P_{i_{N}}\ldots
P_{i_{1}})^{n}$. Hence we have (QUC) for $T^{n}=(P_{N}\ldots
P_{1})^{n}$ as soon as there exist integers $i\not = j$ such that
$c_{ij} = c_0(M_{i}\cap M^{\perp},M_{j}\cap M^{\perp})<1$.  The
following example shows that this sufficient condition for (QUC)
is by far stronger than the condition $c(M_{1},\ldots, M_{N})<1$.

\begin{example}
Let $(e_{n})_{n\geq 0}$ be an orthonormal basis of $H$, and let
$M_{1}$, $M_{2}$ and $M_{3}$ be the following closed subspaces of
$H$: $M_{1}=\overline{\textrm{span}}[e_{3n} \textrm{ ; } n\geq
0]$, $M_{2}=\overline{\textrm{span}}[e_{0},\, e_{3n+1} \textrm{ ;
} n\geq 0]$ and $M_{3}=\overline{\textrm{span}}[e_{1},\, e_{3},\,
e_{3n+2} \textrm{ ; } n\geq 0]$. Then $M_{1}\cap
M_{2}=\textrm{span}[e_{0}]$, $M_{2}\cap
M_{3}=\textrm{span}[e_{1}]$, $M_{1}\cap
M_{3}=\textrm{span}[e_{3}]$ and $M_{1}\cap M_{2}\cap M_{3}=\{0\}$.
We obtain $c_{ij} = c_0(M_{i}, M_{j})=1$ for any $i$ and $j$. But
$c(M_{1},M_{2},M_{3})<1$: indeed if $x=\sum_{n\geq 0}x_{n}e_{n}$,
then a straightforward computation shows that
$||\frac{1}{3}(P_{1}+P_{2}+P_{3})||=\frac{2}{3}$, so that
$c(M_{1},M_{2},M_{3})=\frac{1}{2}$.
\end{example}

The following proposition shows, even in a quantitative way, that the sufficient condition $c(M_1\cap \cdots \cap M_{j-1}, M_j) < 1$, for each $j$, reminiscent of \cite[Theorem 2.2]{SSW} and \cite[Theorem 2.7]{DeHu}, implies that $c(M_1, \dots , M_N) < 1$.

\begin{proposition}\label{prop:estimC}
Let $N \ge 2$. Let $M_1, \cdots, M_N$ be $N$ closed subspaces of $H$ with intersection $M = M_1\cap \cdots \cap M_N$. Denote $c_j = c(M_1\cap \cdots \cap M_{j-1}, M_j)$ for $j$ between $2$ and $N$. Then
$$c(M_1, \dots , M_N) \le 1 - \frac{1}{N-1}\prod_{j=2}^N\left(1 - \sqrt{\frac{c_j+1}{2}}\right)^2 \le 1 - \frac{1}{(N-1)4^{N-1}}\prod_{j=2}^N\left(1-c_j\right)^2 .$$
In particular, $c(M_1, \dots , M_N) < 1$ if each $c_j < 1$, $2\le j \le N$.
\end{proposition}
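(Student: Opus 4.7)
The plan is to reduce the $N$-subspace estimate to an iteration of the two-subspace case, using the inclination $\ell$ as a bridge. Set $A_j := M_1 \cap \cdots \cap M_j$ (so $A_1 = M_1$, $A_j = A_{j-1} \cap M_j$, and $A_N = M$). By Proposition~\ref{prop:CandL} we have $c(M_1, \dots, M_N) \le 1 - \ell(M_1, \dots, M_N)^2/(N-1)$, so it suffices to establish the lower bound $\ell(M_1, \dots, M_N) \ge \prod_{j=2}^N \bigl(1 - \sqrt{(c_j+1)/2}\bigr)$.

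The key two-subspace estimate is $\ell(A, B) \ge 1 - \sqrt{(c(A, B) + 1)/2}$. This follows by specializing Proposition~\ref{prop:CandL} to $N = 2$, which yields $1 - \ell(A, B) \le c(\bC, \bD) = \kappa(A, B)^{1/2}$ (the second equality being Proposition~\ref{prop:FRconf}(e)), and then invoking Proposition~\ref{prop:FRconf}(f) at $N = 2$, which gives $\kappa(A, B) = (c(A, B) + 1)/2$.

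The chaining step turns this into the required $N$-subspace lower bound. Fix $x \in H$ and set $d := \max_{1 \le k \le N}\dist(x, M_k)$ and $\eta_j := \ell(A_{j-1}, M_j)$ for $2 \le j \le N$. The definition of the inclination yields
$$\dist(x, A_j) \le \frac{\max\{\dist(x, A_{j-1}), \dist(x, M_j)\}}{\eta_j} \le \frac{\max\{\dist(x, A_{j-1}), d\}}{\eta_j}.$$
By induction on $j$ one shows that $\dist(x, A_j) \le d/\prod_{k=2}^j \eta_k$. The observation that makes the induction close is that each $\eta_k \le 1$ (because $A_{k-1}, M_k \supseteq A_{k-1} \cap M_k$ forces $\max\{\dist(x, A_{k-1}), \dist(x, M_k)\} \le \dist(x, A_k)$), so the inductive bound on $\dist(x, A_{j-1})$ is already $\ge d$ and hence dominates the max in the displayed inequality. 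Setting $j = N$ and combining with the two-subspace estimate gives the desired lower bound on $\ell(M_1, \dots, M_N)$, and feeding this into Proposition~\ref{prop:CandL} produces the first inequality of the statement.

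The second inequality is then a per-factor elementary comparison: writing $1 - \sqrt{(c+1)/2} = \frac{(1-c)/2}{1 + \sqrt{(c+1)/2}}$ and using $1 + \sqrt{(c+1)/2} \le 2$ on $[0,1]$ yields $1 - \sqrt{(c+1)/2} \ge (1-c)/4$, and squaring and multiplying across the $N-1$ factors yields the claim. The main subtlety of the plan, and the only step that requires real care, is the monotonicity check in the chaining induction; once that is verified, all remaining ingredients are already available from Section~3 of the paper.
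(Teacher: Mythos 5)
Your proof follows essentially the same route as the paper's: bound $c(M_1,\dots,M_N)$ by $1-\ell(M_1,\dots,M_N)^2/(N-1)$ via Proposition~\ref{prop:CandL}, lower-bound the inclination by the product of the pairwise inclinations $\ell(A_{j-1},M_j)$ with $A_j=M_1\cap\cdots\cap M_j$, and convert each factor through the two-subspace estimate $\ell(S_1,S_2)\ge 1-\sqrt{(c(S_1,S_2)+1)/2}$ obtained from Propositions~\ref{prop:CandL} and~\ref{prop:FRconf}. The one genuine difference is the chaining step: the paper cites the proof of Theorem~5.11 of Bauschke--Borwein for the inequality $\ell(M_1,\dots,M_N)\ge\prod_{j=2}^N\ell(A_{j-1},M_j)$, whereas you prove it directly by induction on $\dist(x,A_j)$. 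Your induction is correct -- the monotonicity observation $\ell(A_{k-1},M_k)\le 1$ is what makes the max collapse onto the inductive bound, and the degenerate case where some $\eta_j=0$ is harmless because the asserted product lower bound is then trivial -- so your version has the advantage of being self-contained.

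One caveat, which affects your final step and the literal statement equally. From $1-\sqrt{(c_j+1)/2}\ge(1-c_j)/4$, squaring gives $(1-c_j)^2/16$ per factor, so multiplying over the $N-1$ factors yields the constant $16^{-(N-1)}$, not $4^{-(N-1)}$; your phrase ``squaring and multiplying \dots yields the claim'' therefore does not produce the inequality as printed. In fact the second inequality as stated is false: for $N=2$ and $c_2=0$ the middle quantity is $1-(1-1/\sqrt{2})^2\approx 0.914$ while the right-hand side is $3/4$. The correct constant is $\frac{1}{(N-1)16^{N-1}}$; this is a slip in the paper's own last line as well, and it affects neither the first inequality nor the qualitative conclusion that $c(M_1,\dots,M_N)<1$ when every $c_j<1$.
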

\begin{proof}
The estimates are clear if one of the $c_j$'s is one. Suppose $c_j < 1$ for every $j$. Denote $\ell_j = c(M_1\cap \cdots \cap M_{j-1}, M_j) > 0$ for $j$ between $2$ and $N$. It follows from the proof of \cite[Theorem 5.11]{bauschke/borwein} that $\ell(M_1, \dots , M_N) \ge \ell_2\ell_3\cdots \ell_N$. The proof of Proposition \ref{prop:CandL} for $N=2$, and two given subspaces $S_1$ and $S_2$, yields
$$1-\ell(S_1,S_2) \le \sqrt{\kappa(S_1,S_2)} = \sqrt{\frac{c(S_1,S_2)+1}{2}}.$$
This implies that
$$\ell(S_1,S_2) \ge 1 - \sqrt{\frac{c(S_1,S_2)+1}{2}} \ge \frac{1-c(S_1,S_2)}{4} .$$
Using Proposition \ref{prop:CandL} we obtain
\begin{eqnarray*}
c(M_1, \dots , M_N) &\le& 1 - \frac{\ell(M_1, \dots , M_N)^2}{N-1} \\
   &\le& 1 - \frac{\ell_2^2\ell_3^2\cdots \ell_N^2}{N-1}
   \end{eqnarray*}
   \begin{eqnarray*}
  &\le& 1 - \frac{1}{N-1}\prod_{j=2}^N\left(1 - \sqrt{\frac{c_j+1}{2}}\right)^2 \\
 &\le& 1 - \frac{1}{(N-1)4^{N-1}}\prod_{j=2}^N\left(1-c_j\right)^2 ,
\end{eqnarray*}
which completes the proof.
\end{proof}


\end{document}